\theoremstyle{plain}
\newtheorem{theorem}{Theorem}
\newtheorem{corollary}{Corollary}[theorem] 
\newtheorem{lemma}{Lemma}
\newtheorem{lemmaX}{Lemma}
\newtheorem{problem}{Problem}
\newtheorem{remark}{Remark}
\theoremstyle{definition}
\newcommand{\eps}{\varepsilon}
\newcommand{\dist}{{\rm dist\,}}
\newcommand{\spn}{{\rm span\,}}
\newcommand{\DD}{\mathcal D}
\newcommand{\LL}{\mathscr L}
\newcommand{\R}{\mathbb R}
\newcommand{\NN}{\mathbb N}
\newif\ifComplain
\def\complain#1{\ifComplain\ifhmode \newline\fi{\sf *** \ \ #1
\\}\fi}
\newif\ifmarglab
\def\label#1{\@bsphack\ifmarglab\marginpar{LAB:#1}\fi\if@filesw {\let\thepage\relax
   \def\protect{\noexpand\noexpand\noexpand}%
   \edef\@tempa{\write\@auxout{\string
      \newlabel{#1}{{\@currentlabel}{\thepage}}}}%
   \expandafter}\@tempa
   \if@nobreak \ifvmode\nobreak\fi\fi\fi\@esphack}
\long\def\onefigure#1#2{
\begin{figure*}[tbp]
\begin{center}
#1
\end{center}
\caption{#2}
\end{figure*}
} 
\newcommand\newipefig[2]
\begin{document}

\title[Projections and greedy approximation]{Alternating projections, remotest projections, and greedy approximation}

\author[P. A. Borodin]{Petr A. Borodin}
\address{Department of Mechanics and
Mathematics, Moscow State University, Moscow 119991, Russia}
\email{pborodin@inbox.ru}
\author[E. Kopeck\'a]{ Eva Kopeck\'a}
\address{Department of Mathematics\\
   University of Innsbruck\\
 A-6020 Innsbruck, Austria}
\email {eva.kopecka@uibk.ac.at}

\thanks{The first author was supported by the grant of the Government of the
Russian Federation (project 14.W03.31.0031).}

\subjclass[2010]{Primary: 46C05, Secondary: 05C38}
\keywords{Hilbert space,  products of projections, greedy approximation, rate of convergence}


\begin{abstract}

Let $L_1,L_2,\dots,L_K$ be a family of closed subspaces of a Hilbert space $H$, $L_1\cap \dots \cap L_K =\{0\}$; let $P_k$ be the orthogonal projection onto $L_k$. We consider two types of consecutive projections of an element $x_0\in H$: alternating projections $T^nx_0$, where $T=P_K\circ\dots\circ P_1$,  and remotest projections $x_n$ defined recursively, $x_{n+1}$ being the remotest point for $x_n$ among $P_1x_n,\dots,P_Kx_n$. These $x_n$ can be interpreted as residuals in greedy approximation  with respect to a special dictionary associated with $L_1,L_2,\dots,L_K$. We establish parallels  between convergence properties separately known for alternating projections, remotest projections, and greedy approximation in $H$.

Here are some results. If $L_1^\perp+\dots+L_K^\perp=H$, then $x_n\to 0$ exponentially fast.   In case $L_1^\perp+\dots+L_K^\perp\not=H$, the convergence $x_n\to 0$ can be arbitrarily slow for certain $x_0$.   Such a dichotomy, exponential rate of convergence everywhere on $H$, or arbitrarily slow convergence for certain starting elements,   is valid  for greedy approximation with respect to general dictionaries. The dichotomy was known for alternating projections. Using the methods developed for greedy approximation we  prove  that  $|T^nx_0|\le C(x_0,K)n^{-\alpha(K)}$ for certain positive $\alpha(K)$
and all starting points $x_0\in L_1^\perp+\dots+L_K^\perp$.

\end{abstract}

\maketitle


\section*{Introduction}

Let $H$ denote a real Hilbert space with norm $|\cdot|$ and scalar product $\langle\cdot,\cdot\rangle$.
Let $K\ge 2$ be a fixed integer number and let $L_1,L_2,\dots,L_K$ be a family of $K$
closed   subspaces of $H$ such that $L_1\cap \dots \cap L_K =\{0\}$.
Let $x_0\in H$ and $k_1,k_2,\dots \in \{1,\dots,K\}$
be an arbitrary sequence. Consider the sequence of vectors $x_n$ defined by  $x_n=P_{k_n}x_{n-1}$,
where $P_k$ denotes the orthogonal projection of $H$ onto the subspace  $L_k$.

In the case where $\{k_n\}$ contains each $k\in \{1,\dots,K\}$ infinitely often, $\{x_n\}$ is a weakly-null sequence according to~\cite{AA}. If $H$ is infinite-dimensional and $K\ge 3$, then the sequence $\{x_n\}$  does not, in general, converge in norm~\cite{P,KM,KP}. A list of various conditions sufficient for the norm convergence of $\{x_n\}$ can be found in~\cite{K}.

The most studied special case of  $\{k_n\}$ is the cyclic sequence
$k_n=n \mod K$.  For $T=P_K\circ\dots\circ P_1$   the  {\em alternating projections}
\begin{equation}\label{altpr}
T^n x_0, \qquad n=1,2,\dots,
\end{equation}
 enjoy the following convergence properties:
\begin{enumerate}
\item[(A1)] {\em $|T^nx_0|\to 0$ for any $x_0\in H$;}
\item[(A2)]{\em if $L_1^\perp+\dots+L_K^\perp=H$, then $\|T^n\|\le q^n$ and thus $|T^nx_0|\le |x_0| q^n$ for certain $q=q(L_1,\dots,L_K)\in [0,1)$;}
\item[(A3)]{\em if $L_1^\perp+\dots+L_K^\perp\not=H$, then for any sequence $\alpha_n\to 0$ there exists a starting point $x_0\in H$ such that $|T^nx_0|\ge \alpha_n$, $n=1,2,\dots$.}
\end{enumerate}
For $K=2$ the convergence property (A1) is a classical result of von Neumann \cite{N}, and for  $K\geq 2$ of Halperin \cite{Ha}.
The {\em dichotomy result} (A2) and (A3)  was obtained independently in \cite{BDH,DH} and \cite{BaGM1,BaGM2}.

Another natural way of consecutively projecting onto $L_1,\dots,L_K$ is to choose in every step the projection of $x_n$ which is the  nearest to the origin, or, equivalently, which is the remotest from $x_n$.  Namely, for any $x_0\in H$ we consider the sequence $x_n$ of its {\it remotest projections} defined inductively by
\begin{equation}\label{rempr}
x_{n+1}=P_{i(n)}x_n, \qquad n=0,1,2,\dots,
\end{equation}
where  the  (possibly not unique) number $i(n)$ is chosen so that
$$
\dist(x_n,L_{i(n)})=\max\{\dist(x_n,L_k): k\in \{1,\dots,K\}\}.
$$
Remotest projections have been   investigated in \cite{GPR}, \cite{BB}, and \cite{BarRZ} in a more general setting of convex sets $L_k$.  When $L_k$'s are closed subspaces, Theorem~5.3 of~\cite{BB} provides $x_n\to 0$ for any $x_0\in H$ under the additional assumption that $L_1^\perp+\dots+L_K^\perp=H$.

In the von Neumann case $K=2$, alternating projections and remotest projections are almost the same object: $T^nx_0$ is the $(2n-1)$-th remotest projection of $P_1x_0$. Consequently, analogues of (A1)--(A3) are valid for remotest projections in this case. A natural question arises:
\smallskip

\noindent{\em Do remotest projections (\ref{rempr}) satisfy  something like (A1)--(A3) for any $K\ge 2$?}
\smallskip

In order to answer it,  we observe that remotest projections can be interpreted as residuals in a special greedy approximation process.
We recall the notion of greedy approximation with respect to a dictionary (see \cite{teml_book} for a detailed survey).

A subset $D$ of the unit sphere $S(H)=\{s\in H: |s|=1\}$ is called a {\it dictionary} if $\overline{\spn} D=H$.
For any dictionary $D\subset S(H)$ and any $x_0\in H$, the {\it pure greedy
algorithm} (PGA) generates a sequence $x_n$ defined inductively
by
\begin{equation}\label{greal}
x_{n+1}=x_n-\langle x_n,g_{n+1}\rangle g_{n+1}, \qquad n=0,1,2,\dots,
\end{equation}
where the element $g_{n+1}\in D$ is such that
$$
|\langle x_n,g_{n+1}\rangle |=\max \{|\langle x_n,g\rangle|: g\in D\}.
$$
The existence of the above maximum is an additional condition on
$D$.

It is easy to see that $x_{n+1}= x_n-y_{n+1}$, where $y_{n+1}$ is one of the nearest points to $x_n$ in the set $\Lambda(D)=\{\lambda g:\, \lambda \in {\mathbb R},\, g\in D\}$. Thus the existence of $g_{n+1}$ in (\ref{greal}) is equivalent to the proximality of $\Lambda(D)$.  When $\Lambda(D)$ is not proximal, a {\em weak greedy algorithm} (WGA) is an option. The sequence $\{x_n\}$ is again defined recursively by (\ref{greal}), but $g_{n+1}$ is such that
$|\langle x_n,g_{n+1}\rangle |\ge t_{n+1} \sup \{|\langle x_n,g\rangle|: g\in D\}$, for a given sequence $t_n\in (0,1)$ of weakness parameters (see~\cite{teml_book} for details).

We observe  that (\ref{rempr}) coincides with (\ref{greal}) for
$$
D=D_L=(L_1^\perp\cup\dots\cup L_K^\perp)\cap S(H).
$$
The set $D_L$ is indeed a dictionary, since
   $L_1\cap \dots \cap L_K =\{0\}$  and
$$
\overline{\spn} D_L=\overline{\spn} (L_1^\perp\cup\dots\cup L_K^\perp)=\overline{L_1^\perp+\dots+L_K^\perp}=H.
$$
Denote by $P_k^\perp$  the orthogonal projection onto $L_k^\perp$.
The remotest projection $P_{i(n)}x_n$ clearly corresponds to the  projection $P_{i(n)}^\perp x_n$
which is the nearest to $x_n$ among $P_1^\perp x_n, \dots, P_K^\perp x_n$:
$$
x_{n+1}=P_{i(n)}(x_n)=x_n-P_{i(n)}^\perp x_n.
$$

Jones proved  that PGA  converges for every dictionary $D$, that is,  $|x_n|\to 0$   for $\{x_n\}$ defined by (\ref{greal}) and any initial element $x_0\in H$  (\cite{Jones}; see also \cite[Ch. 2]{teml_book}).
Since $D_L$ is   a dictionary,   remotest projections (\ref{rempr}) have a property similar to (A1):
\begin{remark}\label{remark1}
$|x_n|\to 0$ for any $x_0\in H$; \\ here
$\{x_n\}$  is the sequence of remotest projections defined by (\ref{rempr}).
\end{remark}

DeVore and Temlyakov singled out a set of starting points generated by the dictionary   for which  the greedy algorithm converges polynomially fast \cite{DeVT}; see our Section~3 for details.
Consequently,  remotest projections as greedy residuals   converge polynomially fast for starting points from $L_1^\perp+\dots+L_K^\perp$. In particular:

\smallskip
(R4) {\it If $x_0\in L_1^\perp+\dots+L_K^\perp$, then $|x_n|\le C(x_0) n^{-1/6}$.}
\smallskip

\noindent  Hence in the von Neumann case of  $K=2$ an analog of (R4) is valid for alternating projections  as well.  A natural   question
arises, if this is valid for any $K\ge 2$:
\smallskip

{\it Do alternating projections (\ref{altpr}) converge polynomially fast for starting points from $L_1^\perp+\dots+L_K^\perp$?}

\smallskip

Deutsch and Hundal formulate a similar conjecture
in~\cite[Remark 6.5]{DH}   without any connection to greedy approximation.

In this paper we investigate the interplay  between alternating projections, remotest projections, and  greedy approximation.

If $L_1^\perp+\dots+L_K^\perp=H$, remotest projections indeed converge fast.  According to
Remark~\ref{remark2},
$|x_n|\le |x_0|r^n$ for a certain $r=r(L_1,\dots, L_K)<1$. Moreover,
this $r$ is less than the best known $q$ in (A2), according to  Remark~\ref{remark3}.

In Theorem~\ref{theorem2} we give a
sufficient condition for the fast convergence of greedy algorithm: every element of $H$ is a finite linear combination of elements from the dictionary. The dichotomy of (A2)--(A3) type is valid also for greedy approximation with respect to general dictionaries  and hence for remotest projections as well, as we show in Theorem~\ref{theorem1} and Corollary~\ref{corollary1.1}.

Alternating projections indeed satisfy an analogue of (R4): if $x_0\in L_1^\perp+\dots+L_K^\perp$, then $|T^nx_0|\le C(x_0,K)n^{-\alpha(K)}$ for certain positive $\alpha(K)$, and $\alpha(2)=1/2$ happens to be the best possible; see Theorem~\ref{theorem4} and  Theorem~\ref{theorem3}.

We conclude by verifying that in spite of many similar convergence properties the family of remotest projections  is really distinct  from alternating projections. In Theorem~\ref{theorem5}
we give an example of remotest projections that never become cyclic.

\section{Fast convergence of remotest projections}\label{remotest}

In this section we investigate when  remotest projections converge fast and establish an analogue of (A2) for them.

\smallskip

\begin{remark}\label{remark2}
Let $L_1^\perp+\dots+L_K^\perp=H$. Then
$$
|x_n|\le |x_0| (1-\rho^2)^{n/2},
$$
where $\{x_n\}$ is the sequence of remotest projections defined in (\ref{rempr}), and
$$
\rho=\rho(L_1,\dots,L_K)=\inf \{ \max_k \dist(x,L_k): x\in S(H)\}>0.
$$
More precisely,
\begin{equation}\label{rho*}
|x_n|\le |x_0| (1-\rho^2)^{1/2} (1-\rho_*^2)^{(n-1)/2},
\end{equation}
where
$$
\rho_*=\inf \{ \max_k \dist(x,L_k): x\in
S(H)\cap (L_1\cup\dots\cup L_K)\}\ge \rho.
$$
\end{remark}
\begin{proof} Since $L_1^\perp+\dots+L_K^\perp$ is   closed,  $\rho>0$ by~\cite[Theorem~5.19]{BB}; see also Lemma~\ref{lemma1} proved in the next section. By Pythagoras' theorem and the definition of $\rho$,
$$
|x_{n+1}|^2=|x_n|^2-\dist(x_n,L_{i(n)})^2 \le |x_n|^2(1-\rho^2),
$$
hence $|x_{n}| \le |x_0| (1-\rho^2)^{n/2}$.
The refinement (\ref{rho*}) follows from   $x_n\in L_1\cup\dots\cup L_K$ for $n\ge 1$: the norm
decreases with coefficient at least $(1-\rho_*^2)^{1/2}$ after the
first projection.
\end{proof}

We will generalize Remark~\ref{remark2}  to  greedy approximation with respect to general dictionaries in Theorem~\ref{theorem2}.

Clearly, $\rho(L_1,\dots,L_K)<1$ for any non-trivial family
$L_1,\dots,L_K$. On the other hand, if, for example, the $L_k$'s  are mutually orthogonal, then  $\rho_*=1$. How large exactly  $\rho$ can be seems not  to be  known.

\begin{problem}\label{problem1}
Calculate
$\rho_K:=\sup \rho(L_1,\dots,L_K)$,
where  the supremum is taken over all families $L_1,\dots,L_K\subset H$.
\end{problem}

It is easy to see that $\rho(2)=1/\sqrt{2}$.

Next we compare the rate of convergence of  remotest
projections $|x_n|$ with that of alternating projections $|T^n(x_0)|$ in the ``(A2) case" when $L_1^\perp+\dots+L_K^\perp=H$.

The best known estimate for $\|T^n\|$, which is also valid only when $L_1^\perp+\dots+L_K^\perp=H$, is  \cite[Theorem
4.4]{BaGM2}:
\begin{equation}\label{normT^n}
\|T^n\|\le\left(1-\left(\frac{1-c}{4K}\right)^2\right)^{n/2},
\qquad n=1,2,\dots,
\end{equation}
where $c=c(L_1,\dots,L_K)$ denotes the generalized {\em Friedrichs number}
$$
c:=\sup\left\{\frac{\sum_{j\not=k}\langle y_j,y_k\rangle}{(K-1)(|y_1|^2+\dots+|y_K|^2)}:\,
y_j\in L_j,\, \sum_{k=1}^K |y_k|^2\not=0\right\}.
$$

We have to compare $\rho_*$ with $(1-c)/(4K)$.

\begin{remark}\label{remark3}
For any family $L_1,\dots,L_K$,
$$
\rho_*\ge \frac{1-c}{K-1},
$$
and thus (\ref{rho*}) witnesses a   faster  rate of convergence  than (\ref{normT^n}), in spite of   $T$ involving $K$ projections instead of just one.
\end{remark}
\begin{proof} We choose $y\in S(H)\cap (L_1\cup\dots\cup L_K)$
such that $\max_k \dist(y,L_k)=\rho_*$. If there is no such $y$, we
take one for which this equality ``nearly" holds. We may assume
$y\in L_1$. For $y_1=y$ and $y_k=P_k(y)$ if $k\in\{2,\dots,K\}$, we have
\begin{equation}\notag
\begin{split}
1-c&\leq 1-
\frac{\sum_{j\not=k}\langle y_j,y_k\rangle}{(K-1)(|y_1|^2+\dots+|y_K|^2)}=
\frac{\sum_{k=1}^K\sum_{j\not=k}(|y_k|^2-\langle y_j,y_k\rangle)}{(K-1)(|y_1|^2+\dots+|y_K|^2)} \\
&=\frac{\sum_{j\not=1}\langle y_1,y_1-y_j\rangle+\sum_{k=2}^K\sum_{j\not=k}\langle y_k,y_k-y_j\rangle}{(K-1)(|y_1|^2+\dots+|y_K|^2)}.
\end{split}
\end{equation}
Since $\langle y_j,y_1-y_j\rangle=0$ for  $j\in\{1,\dots,K\}$ we can continue estimating by
\begin{equation}\notag
\begin{split}
\ \ \ &=\frac{\sum_{j\not=1}\langle y_1-y_j,y_1-y_j\rangle+\sum_{k=2}^K\sum_{j\not=k,j\not=1}\langle y_k,y_1-y_j\rangle}{(K-1)(|y_1|^2+\dots+|y_K|^2)} \\
&\leq
\frac{(K-1)\rho_*^2+(K-1)(K-2)\rho_*}{(K-1)(1+|y_2|^2+\dots+|y_K|^2)}\le
\rho_*(\rho_*+K-2)\le (K-1)\rho_*.
\end{split}
\end{equation}
\end{proof}

Above we have compared only {\em estimates} for rates of convergence of different projections but {\em not the rates} themselves.

In several particular examples of   $K$-tuples $\LL=\{L_1,\dots,L_K\}$ and starting elements $x_0$ remotest projections indeed do converge faster than alternating projections.  A quantitative or a category result of this sort for   tuples $(\LL,x_0)$ when  $K\ge 3$   would be of interest.
Of course, $\{T^n(x_0)\}$ may converge to zero faster than $\{x_n\}$ for
particular $x_0$'s.  Consider the four 1-dimensional subspaces $L_1,\dots,L_4$ of $\R^2$, generated by the vectors $(1,0)$, $(0,1)$, $(1,1)$, $(1,\varepsilon-1)$. Here  $\eps>0$ is a small positive number. For $x_0\in L_3\setminus \{0\}$, we have
$$
x_1\in L_4, x_2\in L_3, x_3\in L_4, x_4\in L_3, \dots,
$$
and $x_n\not= 0$ for all $n$, since $L_4$ is the remotest subspace for elements of $L_3$ and vice versa and these two subspaces are not mutually orthogonal. At the same time, $T^nx_0=0$ for all $n=1,2,\dots$, since already $P_2P_1x_0=0$ due to the orthogonality of $L_1$ and $L_2$.

\section{Dichotomy for greedy approximation}\label{greedydichotomy}

In this section, we present a dichotomy result of (A2)--(A3) type for the pure and the weak greedy algorithms 
 and hence also for the remotest projections.

For a dictionary $D\subset S(H)$, we define
$$
\rho(D)=\inf_{x\in S(H)} \sup\{|\langle x,g\rangle| :\, g \in  D\}.
$$
With a family of closed subspaces $L_1,\dots,L_K$ we associate the dictionary
$$
D_L=(L_1^\perp\cup\dots\cup L_K^\perp)\cap S(H).
$$
It is easy to see that   $\rho(D_L) =\rho(L_1,\dots, L_K)$ as  defined in Remark~\ref{remark2}.

Characteristics of dictionaries similar to $\rho(D)$ have  already been used in greedy approximation theory; see {\em e.g.} \cite{teml}. We show that $\rho(D)=0$ if and only if the dictionary $D$ is contained in an ``arbitrarily thin board".

\begin{lemma}\label{lemma1}
Let $D\subset S(H)$ be a dictionary.
\begin{enumerate}
\item[(a)] The equality $\rho(D)=0$ holds if and only if there exists an orthonormal sequence $\{w_n\}$ in $H$ so that
$$
\lim_{n\to \infty}\sup\{|\langle w_n,g\rangle| :\, g \in  D\}=0.
$$
\item[(b)] If $D=D_L$ for  closed  subspaces  $L_1,\dots,L_K$  of $H$, then  $\rho(D_L)=0$ is equivalent to $L_1^\perp+\dots+L_K^\perp\not= H$.
\end{enumerate}
\end{lemma}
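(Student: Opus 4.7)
For part (b), the plan is to pass through the adjoint of the summing map. First observe that $\sup_{g\in D_L}|\langle x,g\rangle|=\max_k|P_k^\perp x|=\max_k\dist(x,L_k)$, so $\rho(D_L)>0$ is equivalent to a uniform bound $\max_k|P_k^\perp x|\ge\rho|x|$ on $H$. Consider then the bounded linear map
\[
T:L_1^\perp\oplus\cdots\oplus L_K^\perp\to H,\qquad T(y_1,\dots,y_K)=y_1+\cdots+y_K,
\]
whose adjoint is $T^*x=(P_1^\perp x,\dots,P_K^\perp x)$ with $|T^*x|^2=\sum_k|P_k^\perp x|^2$. If $\rho(D_L)>0$ then $|T^*x|\ge\rho|x|$, so $T^*$ is bounded below; by the closed range theorem $T$ has closed range, and the identification $\ker T^*=L_1\cap\cdots\cap L_K=\{0\}$ makes the range dense as well, giving $L_1^\perp+\cdots+L_K^\perp=H$. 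Conversely, if this sum equals $H$, the open mapping theorem supplies $C>0$ so that every $x$ admits a decomposition $x=\sum y_k$ with $y_k\in L_k^\perp$ and $|y_k|\le C|x|$; then
\[
|x|^2=\sum_k\langle P_k^\perp x,y_k\rangle\le CK|x|\max_k|P_k^\perp x|,
\]
yielding $\rho(D_L)\ge 1/(CK)$.

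For part (a), $(\Leftarrow)$ is immediate since an orthonormal sequence $\{w_n\}\subset S(H)$ witnessing the limit forces $\rho(D)\le\inf_n\sup_g|\langle w_n,g\rangle|=0$. For $(\Rightarrow)$ I would extract the orthonormal sequence in three steps. First, use $\rho(D)=0$ to pick $u_n\in S(H)$ with $\alpha_n:=\sup_{g\in D}|\langle u_n,g\rangle|\to 0$. Second, show $u_n\rightharpoonup 0$ weakly: any subsequential weak limit $y$ satisfies $\langle y,g\rangle=\lim\langle u_n,g\rangle=0$ for every $g\in D$, and density of $\spn D$ forces $y=0$; weak compactness of the ball (inside the separable closed linear hull of $\{u_n\}$) together with uniqueness of the subsequential weak limit yields full weak convergence. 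Third, apply a diagonal thinning followed by Gram-Schmidt: using $\langle u_m,v\rangle\to 0$ for each fixed $v$, pass to a subsequence (still denoted $u_n$) with $|\langle u_n,u_m\rangle|\le 4^{-\max(n,m)}$ for $n\ne m$, and Gram-Schmidt on this sparse sequence produces an orthonormal $\{w_n\}$ with $|w_n-u_n|\to 0$.

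The only delicate point, and the step I expect to be the main obstacle, is confirming that Gram-Schmidt preserves the smallness of the supremum. Since $w_n-u_n\in\spn(u_1,\dots,u_{n-1})$ has norm controlled by the small off-diagonal inner products, and every $g\in D$ satisfies $|g|=1$, one concludes
\[
\sup_{g\in D}|\langle w_n,g\rangle|\le\alpha_n+|w_n-u_n|\longrightarrow 0,
\]
which is exactly what part (a) requires. The diagonal thinning is feasible because weak nullity of $u_m$ makes its inner product with each previously fixed vector tend to zero, so a greedy inductive selection achieves the prescribed geometric decay without disturbing $\alpha_n\to 0$.
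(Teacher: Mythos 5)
Your proof is correct. For part~(a) you follow essentially the same path as the paper (extract a weakly null sequence of unit vectors witnessing $\rho(D)=0$, then pass to a nearby orthonormal sequence); the paper simply invokes a cited extraction lemma for the second step, while you spell out a self-contained diagonal thinning plus Gram--Schmidt argument. One small slip there: $w_n-u_n$ lies in $\spn(u_1,\dots,u_n)$ rather than $\spn(u_1,\dots,u_{n-1})$, because the normalization of $e_n=u_n-\sum_{j<n}\langle u_n,w_j\rangle w_j$ also moves in the $u_n$ direction; this does not affect your final estimate, which uses only $|w_n-u_n|\to 0$ and Cauchy--Schwarz. For part~(b) your route is genuinely different. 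The paper argues by separation: when $L_1^\perp+\dots+L_K^\perp\ne H$, the symmetric weakly compact convex set $C=B_1+\dots+B_K$ has empty interior, and Hahn--Banach produces unit functionals $v_n$ with $\sup_{g\in D_L}|\langle v_n,g\rangle|\le 1/n$; the converse is a contradiction argument using the orthonormal sequence from part~(a) and the element $x=\sum w_n/n$. You instead introduce the summing operator $T:L_1^\perp\oplus\dots\oplus L_K^\perp\to H$, identify $T^*x=(P_1^\perp x,\dots,P_K^\perp x)$, and observe that $\rho(D_L)>0$ is precisely the statement that $T^*$ is bounded below; since $\ker T^*=L_1\cap\dots\cap L_K=\{0\}$, the closed range theorem converts this into surjectivity of $T$, while the open mapping theorem handles the converse and yields the explicit bound $\rho(D_L)\ge 1/(CK)$. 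Your operator-theoretic argument is shorter, more conceptual, and quantitative; the paper's stays closer to the Hilbert geometry of $D_L$ and uses only separation and Baire category, without naming the summing operator. Both are sound.
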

\begin{proof} If $\rho(D)=0$ we choose a weakly convergent sequence $v_k\in S(H)$ so that $\lim_{k\to \infty}\sup\{|\langle v_k,g\rangle| :\, g \in  D\}=0$.  The sequence $\{v_k\}$ converges weakly to zero, since $\overline{\spn} D=H$.  There is an orthonormal sequence $\{w_n\}$ and a subsequence of $\{v_k\}$, so that
$\lim_{n\to \infty}|w_n-v_{k_n}|=0$ (see {\em e.g.} Lemma~6.2 of~\cite{K}). Then $
\lim_{n\to \infty}\sup\{|\langle w_n,g\rangle| :\, g \in  D\}=0$.
The opposite implication of (a) is obvious.

That in the situation of (b) the existence of the orthonormal sequence $\{w_n\}$ is equivalent to  $L_1^\perp+\dots+L_K^\perp\not= H$ was proved  in  Lemma~1.1 of \cite{K} which in turn follows from \cite{BB}.

We give here a different proof.  Assume $L_1^\perp+\dots+L_K^\perp\not= H$. The unit ball $B_k$ of $L_k^\perp$ is a weakly compact set, hence $C=B_1+\dots+B_K$ is a symmetric  weakly compact  convex set with empty interior. We choose a sequence $\{v_n\}$  that separates vectors of
an  arbitrarily small norm from $C$.  Namely, for $n\in \NN$ we choose $z_n\in H\setminus C$ and $ v_n\in H$ so that
$|z_n|\leq 1/n$, $|v_n|=1$, and  $\max_{x\in C}|\langle v_n,x\rangle|\leq  \langle v_n,z_n\rangle\leq 1/n$. Hence $\lim_{n\to \infty}\sup\{|\langle v_n,g\rangle| :\, g \in  D_L\}=0=\rho(D_L)$, since $D_L\subset C$.

Now assume $L_1^\perp+\dots+L_K^\perp=H$ and to the contrary assume that $\rho(D_L)=0$. By  (a) there exists an orthonormal sequence
$\{w_n\}$  such that
$\sup_{g\in D_L}|\langle w_n,g\rangle|< 1/n^2$ for $n\in \NN$.
 We choose $\lambda_j\in \R$  and $g_j\in L_j^\perp$ so that
 $$
 x=\sum_{n=1}^{\infty}w_n/n=\lambda_1g_1+\dots+\lambda_Kg_K.
 $$
 Then for all $n\in \NN$ we have
 $$
 \frac 1n=|\langle w_n,x\rangle|\leq \sum_{j=1}^K|\lambda_j||\langle w_n,g_j\rangle|\le\frac 1{n^2}\sum_{j=1}^K|\lambda_j|,
 $$
 which is a contradiction.
\end{proof}

 The characteristic $\rho(D)$ influences the rate of convergence of the greedy algorithm.
 If $\rho(D)>0$, the algorithm converges fast everywhere; if $\rho(D)=0$,
 it converges arbitrarily slowly for certain starting elements.

\begin{theorem}\label{theorem1}
Let $D\subset S(H)$ be a dictionary.
\begin{enumerate}
\item[(i)]  If $\rho(D)>0$, then
\begin{equation}\label{WGArate}
|x_n|\le |x_0| \prod_{k=0}^{n-1}(1-t_k^2\rho(D)^2)^{1/2},
\qquad n\in \NN,
\end{equation}
for every $x_0\in H$ and its sequence  $\{x_n\}$ of  WGA greedy residuals with weakness parameters $\{t_k\}$ as defined in (\ref{greal}) . In particular, for PGA greedy residuals (if PGA is possible for $D$) we have $|x_n|\le |x_0| (1-\rho(D)^2)^{n/2}$.
\item[(ii)]   If $\rho(D)=0$, then  for every sequence $\alpha_n\to 0$ there exists a starting element $x_0\in H$ such that its sequence  of greedy residuals in PGA or in WGA with any weakness parameters satisfies  $|x_n|\geq \alpha_n$ for $n\in \NN$.
\end{enumerate}
\end{theorem}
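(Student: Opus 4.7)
For (i) I would proceed by a direct Pythagorean computation: since $|g_{n+1}|=1$, the update (\ref{greal}) gives $|x_{n+1}|^2=|x_n|^2-\langle x_n,g_{n+1}\rangle^2$. The WGA selection rule provides $|\langle x_n,g_{n+1}\rangle|\ge t_{n+1}\sup_{g\in D}|\langle x_n,g\rangle|$, and applying the definition of $\rho(D)$ to the unit vector $x_n/|x_n|$ (for $x_n\ne 0$) yields $\sup_{g\in D}|\langle x_n,g\rangle|\ge \rho(D)|x_n|$. Combining these two inequalities gives $|x_{n+1}|^2\le |x_n|^2\bigl(1-t_{n+1}^2\rho(D)^2\bigr)$, and iterating from $n=0$ produces (\ref{WGArate}) (as well as the PGA specialisation by taking $t_k\equiv 1$).

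For (ii), the idea is to build $x_0$ out of an orthonormal sequence ``nearly invisible'' to $D$. Using Lemma~\ref{lemma1}(a) I extract an orthonormal $\{w_n\}\subset H$ with $\eta_n:=\sup_{g\in D}|\langle w_n,g\rangle|\to 0$ and, by passing to a subsequence, arrange $\eta_n$ to decrease as rapidly as needed. Replacing $\alpha_n$ by $\widetilde\alpha_n:=\sup_{k\ge n}\alpha_k\ge \alpha_n$ makes it non-increasing with $\widetilde\alpha_n\to 0$; set $c_n^2:=\widetilde\alpha_{n-1}^2-\widetilde\alpha_n^2\ge 0$, so that $\sum_{n\ge 1}c_n^2=\widetilde\alpha_0^2$, and take $x_0:=\sum_{n\ge 1}c_nw_n$. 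For any PGA or WGA trajectory one has $\beta_k:=|\langle x_{k-1},g_k\rangle|\le S_{k-1}:=\sup_{g\in D}|\langle x_{k-1},g\rangle|$, so proving $S_{k-1}\le c_k$ for every $k$ would immediately give $|x_n|^2=|x_0|^2-\sum_{k\le n}\beta_k^2\ge \widetilde\alpha_0^2-\sum_{k\le n}c_k^2=\widetilde\alpha_n^2\ge\alpha_n^2$, as required.

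The principal obstacle is controlling $S_k$ along the (unknown) trajectory: the basic recursion $x_{k+1}=x_k-\beta_{k+1}g_{k+1}$ combined with the PGA maximality only yields $S_{k+1}\le 2S_k$, hence the useless a priori bound $S_k\le 2^k S_0$. To beat this exponential blow-up I plan a staircase (block) construction, paralleling the slow-convergence arguments for alternating projections in \cite{BDH,BaGM2,DH}: partition $\NN$ into finite blocks $B_1,B_2,\dots$, assign the vectors $\{w_n:n\in B_\ell\}$ to block $\ell$ with $\eta$-values chosen so small---relative to $|B_\ell|$ and the local budget $\sum_{n\in B_\ell}c_n^2$---that the $2^{|B_\ell|}$-factor accumulated during the steps of block $\ell$ is dwarfed by that budget, and verify inductively that at the end of each block the residual is again close to the tail $\sum_{n>\max B_\ell}c_nw_n$, which has small correlation with $D$ and thereby resets the initial condition for the next block. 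Tuning the block lengths against the prescribed $\{\alpha_n\}$ and the $\eta_n$-decay is the delicate point; passage from PGA to arbitrary WGA is automatic since only $\beta_k\le S_{k-1}$ is used.
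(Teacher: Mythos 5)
Part (i) is correct and identical in substance to the paper's proof: Pythagoras plus the weakness condition plus the definition of $\rho(D)$ applied to $x_n/|x_n|$.

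For part (ii), your setup (orthonormal $\{w_n\}$ nearly invisible to $D$, $x_0$ built from them) is the right skeleton, but the quantity you decide to control is the wrong one and this creates a genuine gap. You aim to show $S_{k-1}:=\sup_{g\in D}|\langle x_{k-1},g\rangle|\le c_k$ along the whole trajectory, observe that the naive recursion only gives exponential growth, and then propose a block/staircase construction whose key inductive step (``the residual resets at the end of each block to something with small correlation with $D$'') you do not actually establish --- it is far from clear that any such reset occurs, since the greedy algorithm can pick arbitrary $g_j\in D$ and you have no control over the side effects of subtracting $\beta_j g_j$ on $\sup_g|\langle x_j,g\rangle|$. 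The paper's argument sidesteps $S_k$ entirely. The only bound it needs on the coefficients is the trivial one: writing $x_m=x_0-\sum_{j=1}^m\lambda_j g_j$ with $\lambda_j=\langle x_{j-1},g_j\rangle$, Cauchy--Schwarz and the monotonicity of $|x_j|$ give $|\lambda_j|\le|x_{j-1}|\le|x_0|$ for every $j$, regardless of which $g_j$'s are chosen. Taking $x_0=\sum_{n\ge1}w_n/n$ and pairing with $w_n$ then gives
\[
|x_m|\ \ge\ |\langle x_m,w_n\rangle|\ \ge\ \frac{1}{n}-|x_0|\sum_{j=1}^m|\langle g_j,w_n\rangle|\ \ge\ \frac{1}{n}-\frac{|x_0|\,m}{m_n},
\]
and the $m_n$'s are chosen in advance (from the decay of $\alpha_m$ and the $\eta_n$'s of Lemma~\ref{lemma1}) so that the right-hand side exceeds $\alpha_m$ for the appropriate $n=n(m)$. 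No control of suprema along the trajectory, no blocks, and no case analysis on the greedy choices is required; a fixed witness $w_{n(m)}$ for each $m$ suffices. You should replace the $S_k\le c_k$ goal by this coefficient bound $|\lambda_j|\le|x_0|$ and the explicit choice $x_0=\sum w_n/n$; your $c_n=(\widetilde\alpha_{n-1}^2-\widetilde\alpha_n^2)^{1/2}$ coefficients are less convenient here because they need not dominate the tail of $\alpha$ at the scale required for the lower bound.
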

\begin{proof} (i) According to the definition (\ref{greal}) of $\{x_n\}$,
\begin{equation}\notag
\begin{split}
|x_{k+1}|^2&=|x_k|^2-|\langle x_k, g_{k+1}\rangle |^2 \le |x_k|^2-t_k^2 \sup_{g\in D}|\langle x_k, g\rangle |^2\\
&\le |x_k|^2-t_k^2|x_k|^2\rho(D)^2= |x_k|^2(1-t_k^2\rho(D)^2),
\end{split}
\end{equation}
and hence (\ref{WGArate}) holds.

(ii) We can assume that $\alpha=\max_{m\in \NN} |\alpha_m|\leq 1/2$:
if $\alpha>1/2$ and $x_0\in H$ works for the sequence $\{\alpha_m/(2\alpha)\}$, then $2\alpha x_0$ works for $\{\alpha_m\}$.
We choose  $1\leq m_1<m_2<\dots$ so that $|\alpha_m|<1/(2n+2)$ if $m>m_n/(4n)$.

By Lemma~\ref{lemma1}, there is an orthonormal sequence  $\{w_n\}$ such that
$$
\sup\{|\langle w_n,g\rangle|:\, g\in  D\}\leq 1/m_n, \qquad n\in \NN.
$$
Consider $x_0=\sum_{n=1}^{\infty}w_n/n$. Then $|x_0|=\pi/\sqrt6<2$.
The $m$-th greedy residual of $x_0$ has the form
$$
x_m=x-\lambda_1g_1-\dots-\lambda_mg_m,
$$
where $g_j\in D$ and $|\lambda_j|=|\lambda_j g_j| \le |x_{j-1}|\leq |x_0|$.

 For a given $m\in \NN$ we choose $n\in \NN$ so that
$$
\frac1{2(n+1)}\leq |\alpha_m|\leq \frac1{2n}.
$$
Then $m\leq m_n/(4n)$, and
$$
|x_m|\geq |\langle x_m,w_n\rangle|\geq 1/n- \sum_{j=1}^m |\lambda_j| |\langle g_j,w_n\rangle|
$$
$$
\geq 1/n -|x_0|m/m_n\geq 1/(2n)\geq |\alpha_m|.
$$
\end{proof}

Since remotest projections correspond to greedy residuals, Theorem~\ref{theorem1} and (b) of Lemma~\ref{lemma1} imply the  dichotomy below.
This is a remotest projections analogue of the dichotomy result (A2)--(A3) for the alternating projections.

\begin{corollary}\label{corollary1.1}
Let $L_1,\dots, L_K$ be closed subspaces of a Hilbert space $H$.
\begin{enumerate}
\item[(i)]  If $L_1^\perp+\dots+L_K^\perp= H$, then there exists $\rho\in(0,1]$ such that $|x_n|\le |x_0|(1-\rho^2)^{n/2}$
for every $x_0\in H$ and its sequence of remotest projections (\ref{rempr}).
\item[(ii)]   If $L_1^\perp+\dots+L_K^\perp\not= H$, then  for every sequence $\alpha_n\to 0$ there exists a starting element $x_0\in H$ such that its remotest projections satisfy  $|x_n|\geq \alpha_n$ for $n\in \NN$.
\end{enumerate}
\end{corollary}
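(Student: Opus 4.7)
The plan is to reduce the corollary entirely to Theorem~\ref{theorem1} and Lemma~\ref{lemma1}(b) via the dictionary $D_L=(L_1^\perp\cup\dots\cup L_K^\perp)\cap S(H)$ introduced earlier in the Introduction. The key bridge, already noted right before Remark~\ref{remark1}, is the identity
$$
x_{n+1}=P_{i(n)}x_n=x_n-P_{i(n)}^\perp x_n,
$$
together with the fact that, for any $x\in H$, the vector $P_k^\perp x$ which is nearest to $x$ among $P_1^\perp x,\dots,P_K^\perp x$ is precisely the one that maximizes $|\langle x,g\rangle|$ over $g\in D_L$. Thus the remotest-projection sequence $\{x_n\}$ of~(\ref{rempr}) is exactly a PGA greedy residual sequence for $x_0$ with respect to $D_L$, and moreover PGA is well-defined on $D_L$ since the maximum is attained (the supremum over $D_L$ reduces to a maximum over the $K$ projections $|P_k^\perp x|$).

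For part (i), I first invoke Lemma~\ref{lemma1}(b) to conclude that $L_1^\perp+\dots+L_K^\perp=H$ is equivalent to $\rho(D_L)>0$. Setting $\rho:=\rho(D_L)\in(0,1]$ and applying Theorem~\ref{theorem1}(i) in its PGA form (i.e.\ with weakness parameters $t_k\equiv 1$) yields
$$
|x_n|\le |x_0|(1-\rho^2)^{n/2}, \qquad n\in\NN,
$$
for every $x_0\in H$, which is exactly the stated bound.

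For part (ii), Lemma~\ref{lemma1}(b) gives that $L_1^\perp+\dots+L_K^\perp\neq H$ implies $\rho(D_L)=0$. Theorem~\ref{theorem1}(ii), applied to the dictionary $D_L$ and the sequence $\alpha_n\to 0$, then produces a starting element $x_0\in H$ whose PGA residuals with respect to $D_L$ satisfy $|x_n|\ge\alpha_n$ for all $n\in\NN$; by the identification above, these residuals are the remotest projections of $x_0$.

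There is essentially no obstacle beyond the observation that remotest projections \emph{coincide} with PGA residuals of $D_L$; all the real work has been done in Lemma~\ref{lemma1} and Theorem~\ref{theorem1}. The only point worth a brief word in the write-up is that Theorem~\ref{theorem1}(ii) applies irrespective of whether PGA is well-defined on $D_L$, since its statement covers both PGA and WGA with arbitrary weakness parameters, so one need not worry about existence of maximizers in the arbitrarily-slow regime either.
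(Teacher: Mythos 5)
Your proposal is correct and follows exactly the paper's route: the paper states this corollary as an immediate consequence of Theorem~\ref{theorem1} and Lemma~\ref{lemma1}(b), relying on the identification (made in the Introduction) of remotest projections with PGA residuals for the dictionary $D_L$. Your added remark that PGA is well-posed for $D_L$ because the supremum is a maximum over the $K$ values $|P_k^\perp x|$ is a sensible observation but not strictly necessary since Theorem~\ref{theorem1}(ii) covers WGA as well.
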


The equality $L_1^\perp+\dots+L_K^\perp= H$ means that every element of $H$ can be represented as a linear combination of $K$ elements of the dictionary $D_L=(L_1^\perp\cup\dots\cup L_K^\perp)\cap S(H)$ associated with the remotest projections. The statement (i) of Corollary~\ref{corollary1.1} can be generalized to arbitrary dictionaries in this sense.

\begin{theorem}\label{theorem2}
Let a dictionary $D\subset S(H)$ be so that every element of $H$ is a finite linear
combination of  elements of $D$. Then $\rho(D)>0$ and  the estimate (\ref{WGArate}) holds for every starting point $x_0\in H$.
\end{theorem}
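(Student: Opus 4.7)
My plan is to reduce the statement to Theorem~\ref{theorem1}(i) by showing that the hypothesis forces $\rho(D)>0$, and then the estimate (\ref{WGArate}) is automatic. So the whole content is the positivity of $\rho(D)$, which I would prove by contradiction, essentially recycling the second half of the proof of Lemma~\ref{lemma1}(b) in the dictionary language.

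Suppose, toward a contradiction, that $\rho(D)=0$. By Lemma~\ref{lemma1}(a), there is an orthonormal sequence $\{w_n\}\subset H$ with $\sup_{g\in D}|\langle w_n,g\rangle|\to 0$. Passing to a subsequence, I may arrange that $\sup_{g\in D}|\langle w_n,g\rangle|\le 1/n^2$ for every $n\in\NN$. Set
$$
x=\sum_{n=1}^\infty \frac{w_n}{n}\in H,
$$
which converges in norm because $\sum 1/n^2<\infty$. By hypothesis, $x$ admits a \emph{finite} representation $x=\lambda_1g_1+\dots+\lambda_Ng_N$ with $g_j\in D$ and $\lambda_j\in\R$. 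Taking inner products with $w_n$ gives
$$
\frac{1}{n}=|\langle w_n,x\rangle|\le \sum_{j=1}^N |\lambda_j|\,|\langle w_n,g_j\rangle|\le \frac{1}{n^2}\sum_{j=1}^N |\lambda_j|,
$$
which yields $n\le \sum_{j=1}^N|\lambda_j|$ for every $n\in\NN$, a contradiction. Hence $\rho(D)>0$.

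With $\rho(D)>0$ established, the estimate (\ref{WGArate}) for every $x_0\in H$ is then an immediate consequence of Theorem~\ref{theorem1}(i). The only subtlety I anticipate is purely notational: making sure that the finiteness of the representation $x=\sum_{j=1}^N\lambda_j g_j$ is genuinely used (this is exactly what prevents the bound $\sum_j |\lambda_j|$ from depending on $n$), since for merely norm-convergent series the same argument collapses. No approximation arguments are needed, so I do not expect any real obstacle beyond writing the contradiction cleanly.
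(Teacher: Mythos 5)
Your proposal is correct and follows essentially the same route as the paper: reduce to Theorem~\ref{theorem1}(i) by proving $\rho(D)>0$, and obtain that positivity by contradiction via Lemma~\ref{lemma1}(a), the element $x=\sum_{n\ge 1}w_n/n$, and its finite representation $x=\lambda_1g_1+\dots+\lambda_Ng_N$. The only cosmetic difference is that you explicitly pass to a subsequence to arrange $\sup_{g\in D}|\langle w_n,g\rangle|\le 1/n^2$, a step the paper leaves implicit.
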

\begin{proof} According to Theorem~\ref{theorem1} it is enough to prove that $\rho(D)>0$.  We mimic  the proof   of   Lemma~\ref{lemma1}(b).
Assume  to the contrary  $\rho(D)=0$. By Lemma~\ref{lemma1} there exists an orthonormal sequence
$\{w_n\}$  such that
$$
\sup_{g\in D}|\langle w_n,g\rangle|< 1/n^2,\ n\in \NN.
$$

 We choose $\lambda_j\in {\mathbb R}$  and $g_j\in D$ so that
 $$
 x=\sum_{n=1}^{\infty}w_n/n=\lambda_1g_1+\dots+\lambda_Ng_N.
 $$
 Then for all $n\in \NN$ we have
 $$
 \frac 1n=|\langle w_n,x\rangle|\leq \sum_{j=1}^N|\lambda_j||\langle w_n,g_j\rangle|\le\frac 1{n^2}\sum_{j=1}^N|\lambda_j|,
 $$
 which is impossible.
\end{proof}

The converse  is not true: $\rho(D)>0$ does not imply
that every $x\in H$ can be represented
  as a finite linear
combination of  elements of $D$. Take, for example, $D\subset S(l_2)$ consisting of all unit vectors with finite number of non-zero coordinates.

A normalized Hamel basis of $H$ is an example of a dictionary that represents every   $x\in H$ as a  finite linear combination of its elements. If $H$ is infinite dimensional, the number of these elements is not uniformly bounded.

\begin{remark}\label{wclosed} Let  $D\subset S(H)$  be a dictionary such  that every $x\in H$  is a   linear combination of  finitely many elements of $D$.
Suppose, moreover,  that the set $\Lambda(D)=\{\lambda g:\, \lambda \in {\mathbb R},\, g\in D\}$ is weakly closed. Then  there exists $K\in \NN$ so that  each   $x\in H$  is  a linear combination of  $K$ elements of $D$.
\end{remark}
\begin{proof} Let $B$ be the closed unit ball of $H$.
The set $A=B\cap \Lambda(D)$ is symmetric and weakly compact, hence
$$
A_n=\underbrace{A+\dots+A}_n
$$
is also symmetric and weakly compact.
Since $H=\bigcup_{n=1}^{\infty}A_n$ is a countable union of closed
sets $A_n$, by the Baire category theorem there is an $N\in \NN$ so that the interior of $A_N$ is not empty.
Since $A_N$ is symmetric, the origin is contained in  the interior of   $A_N+A_N$. Hence
 every element of $H$ is a linear combination of no more than $K=2N$ elements of $D$.
\end{proof}

\section{Convergence rate for starting points from $L_1^\perp\cup\dots\cup L_K^\perp$}

Let $D\subset S(H)$ be a dictionary for which PGA (\ref{greal}) works. The general greedy approximation theory guarantees the rate of convergence
\begin{equation}\label{1/6}
|x_n|\le \frac{C(x_0)}{n^{1/6}}
\end{equation}
of greedy residuals for starting elements $x_0\in A_1(D):=\bigcup_{\lambda>0} \DD_\lambda$, where
\begin{equation}\notag
\DD_\lambda=\overline{\left\{\sum_{k=1}^m \lambda_k g_k: g_k\in D, m\in {\mathbb N}, \sum_{k=1}^m |\lambda_k|\le \lambda\right\} };
\end{equation}
see~\cite{DeVT}, \cite[Theorem 2.18]{teml_book}.  Moreover, the power $1/6$ here can be replaced by $0.182$~\cite{Sil} but cannot be replaced by $0.1898$~\cite{Liv}. The   exact power  in (\ref{1/6}) is not known.

Consider the special case of remotest projections (\ref{rempr}), when $D=D_L=(L_1^\perp\cup\dots\cup L_K^\perp)\cap S(H)$.
Denote $Y=L_1^\perp+\dots+L_K^\perp$ and  by $B_k$ the unit ball of $L_k ^{\perp}$. Then $C=B_1+\dots+B_K$ is a weakly compact set.
By the triangle inequality $\DD_\lambda\subset \lambda C$ for $\lambda>0$ and
$$
Y\subset A_1(D_L)=\bigcup_{\lambda>0} \DD_\lambda\subset\bigcup_{\lambda>0} \lambda C=Y.
$$
Hence for $x_0\in Y$ and its remotest projections $x_n$ the inequality   (\ref{1/6}) holds. This is the property (R4)   mentioned in Introduction. Since we deal with a very specific dictionary $D_L$, we face

\begin{problem}\label{problem2}
Can one refine (\ref{1/6}) in the case of remotest
projections (\ref{rempr})?
\end{problem}

In the von Neumann case $K=2$ the answer is yes: in Theorem~\ref{theorem3} we show that the best possible power is $1/2$. For
     $K\ge 3$ the problem  is open.

In Theorem~\ref{theorem3} and Theorem~\ref{theorem4} we
prove estimates of the type (\ref{1/6}) for the norm of alternating projections $|T^nx_0|$ of an element $x_0\in Y$. We use   a machinery developed in~\cite{DeVT}:   a simplified version in Theorem~\ref{theorem3} and a more complicated   version in Theorem~\ref{theorem4}.

We begin with preliminary lemmata and a notation.

\begin{lemma}\label{lemma2}
If $y\in Y=L_1^\perp+\dots+L_K^\perp$, then $P_j y\in Y$ for any $j\in \{1,\dots,K\} $.
\end{lemma}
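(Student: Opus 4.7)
\medskip

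\noindent\emph{Plan.} My plan is to exploit the fact that $Y$ is a linear subspace (a sum of subspaces), so it is closed under subtraction, combined with the trivial observation that the identity decomposes as $\Id = P_j + P_j^\perp$.

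First I would write, for any vector $y \in H$ and any $j \in \{1,\dots,K\}$, the orthogonal decomposition $y = P_j y + P_j^\perp y$, so that $P_j y = y - P_j^\perp y$. Next I would observe that $P_j^\perp y \in L_j^\perp$, and since $L_j^\perp$ is one of the summands in $Y = L_1^\perp + \dots + L_K^\perp$, we have $P_j^\perp y \in Y$. Finally, if the hypothesis $y \in Y$ holds, then $P_j y = y - P_j^\perp y$ is the difference of two elements of the subspace $Y$ and hence belongs to $Y$.

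There is essentially no obstacle here; the statement is a one-line verification and the only thing to keep in mind is that $Y$, being a sum of subspaces, is itself a (not necessarily closed) linear subspace, so differences stay inside it. The lemma will be used later as a bookkeeping tool to guarantee that the sequences of partial projections produced by the alternating scheme remain inside $Y$, which is what allows the greedy-approximation estimate of type (\ref{1/6}) to be applied along the orbit.
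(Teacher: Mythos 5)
Your proof is correct and uses the same key identity $P_j y = y - P_j^\perp y$ and the same observation $P_j^\perp y \in L_j^\perp$ as the paper; the only cosmetic difference is that you invoke closure of the subspace $Y$ under subtraction where the paper instead writes out the decomposition $y_1+\dots+(y_j - P_j^\perp y)+\dots+y_K$ explicitly.
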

\begin{proof}
Let $y=y_1+\dots+y_K$, $y_i\in L_i^\perp$. We have
$$
P_jy=y-P_j^\perp y= y_1+\dots+y_{j-1}+(y_j-P_j^\perp y)+y_{j+1}+\dots+y_K\in Y,
$$
since $y_j-P_j^\perp y\in L_j^\perp$.
\end{proof}

For $y\in Y=L_1^\perp+\dots+L_K^\perp$, we denote
\begin{equation}\label{s(y)}
s(y)= \inf \{|y_1|+\dots+|y_K|: y=y_1+\dots+y_K, y_j\in L_j^\perp\}.
\end{equation}
It is readily checked that $s$ is a norm on $Y$.
By the
triangle inequality $|y|\leq s(y)$  for $y\in Y$, hence every norm-open set is also $s$-open. For completeness we observe, although we do not use  it in this paper, that the norm $s$ is complete.

\begin{remark}\label{remark5}
The subspace $Y$ equipped with the norm $s$ is a Banach space.
\end{remark}
\begin{proof}
Denote by $(\tilde Y,s)$  the completion of $Y$.  The identity mapping $I:(Y,s)\to(H,|\cdot|)$ is  Lipschitz.
It admits a unique uniformly continuous extension, an injection  $f:(\tilde Y,s)\to (H,|\cdot|)$; see {\em e.g.} \cite{R}, p. 82.
It remains to show that $f(\tilde Y)=Y$.
Assume $y_n\in Y$ and $s$-$\lim_{n\to \infty} y_n=\tilde y\in \tilde Y$. Since $\{y_n\}$ is an $s$-Cauchy sequence in $Y$  it is norm-Cauchy   as well, and  $\lim_{n\to \infty} y_n=y\in H$.
The sequence  $\{y_n\}$ is contained in the weakly compact set
$rB_{L^{\perp}_1}+\dots+rB_{L^{\perp}_K}\subset Y$ for some $r>0$ since it is bounded in $s$, implying $y\in Y$. The continuity of $f$ yields
$f(\tilde y)=f(s$-$\lim_{n\to \infty} y_n)=\lim_{n\to \infty} f(y_n)=\lim_{n\to \infty} y_n=y\in Y$.
\end{proof}

For any $x\in H$, let $g(x)\in D_L$  be the vector in $D_L$ with direction  closest to that of $x$ (if there is more than one, we choose one of them):
\begin{equation}\label{g(x)}
\langle x,g(x)\rangle=\max \{\langle x,g\rangle: g\in  D_L\}.
\end{equation}
Denote  the cosine of the angle between   $x$ and $g(x)$ by
\begin{equation}\label{rho(x)}
\rho(x)=\frac{\langle x,g(x)\rangle}{|x|}.
\end{equation}
The direction  $g(x)$  determines the subtrahend in the remotest step (\ref{rempr}): $x_{n+1}=x_n-\langle x_n, g(x_n)\rangle g(x_n)$. The value  $\rho(x_n)$ determines the decay of the norm in the remotest step:
\begin{equation}\label{rho}
|x_{n+1}|^2=|x_n|^2-\langle x_n,g(x_n)\rangle^2=|x_n|^2(1-\rho (x_n)^2).
\end{equation}
On the subspace $Y$ the ratio between the Hilbert space norm and the norm $s$ gives a handy lower estimate for $\rho$.

\begin{lemmaX}\label{lemmaA}~\cite[Lemma 2.17]{teml_book}
For $y\in Y=L_1^\perp+\dots+L_K^\perp$, we have
$$
\rho(y)\ge \frac{|y|}{s(y)}.
$$
\end{lemmaX}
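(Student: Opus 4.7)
The plan is to prove the inequality $\rho(y) \ge |y|/s(y)$ by a dual characterization of $|y|^2$. The idea is to expand $|y|^2 = \langle y, y \rangle$ using an arbitrary decomposition $y = y_1 + \dots + y_K$ with $y_j \in L_j^\perp$, and then bound each cross-term $\langle y, y_j \rangle$ in terms of the maximum inner product $\langle y, g(y)\rangle$ that defines $\rho(y)$.

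First I would observe that the dictionary $D_L = (L_1^\perp \cup \dots \cup L_K^\perp) \cap S(H)$ is symmetric, because each $L_j^\perp$ is a linear subspace and hence closed under negation. This implies
$$
\max\{\langle y, g\rangle : g \in D_L\} = \max\{|\langle y, g\rangle| : g \in D_L\} = \rho(y)\,|y|.
$$
In particular, for any nonzero vector $y_j \in L_j^\perp$, the normalization $y_j/|y_j|$ belongs to $D_L$, so
$$
\langle y, y_j \rangle = |y_j|\,\Bigl\langle y,\, \tfrac{y_j}{|y_j|}\Bigr\rangle \le |y_j|\,\rho(y)\,|y|,
$$
with the inequality trivially holding also for $y_j = 0$.

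Next I would fix a decomposition $y = y_1 + \dots + y_K$ with $y_j \in L_j^\perp$, and sum the above bounds:
$$
|y|^2 = \langle y, y \rangle = \sum_{j=1}^K \langle y, y_j \rangle \le \rho(y)\,|y|\,\sum_{j=1}^K |y_j|.
$$
Dividing by $|y|$ (we may assume $y \neq 0$, otherwise the claim is vacuous), this gives $|y| \le \rho(y)(|y_1| + \dots + |y_K|)$. Taking the infimum over all admissible decompositions yields $|y| \le \rho(y)\,s(y)$, which is the desired estimate.

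I do not anticipate a genuine obstacle here: the only subtlety is the symmetry argument, which is what allows one to turn a maximum over $D_L$ into a two-sided bound and thereby absorb any sign of $\langle y, y_j/|y_j|\rangle$. Everything else is a one-line duality computation between the Hilbert norm and the atomic decomposition norm $s$.
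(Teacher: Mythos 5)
Your proof is correct and takes essentially the same route as the paper's: both expand $|y|^2=\langle y, y_1+\dots+y_K\rangle=\sum_j |y_j|\langle y, y_j/|y_j|\rangle$, bound each inner product by $\langle y,g(y)\rangle=\rho(y)|y|$ using the definition of $g(y)$, divide by $|y|$, and pass to the infimum over decompositions. The only extra ingredient you add is the remark that $D_L$ is symmetric, used to identify $\max\{\langle y,g\rangle: g\in D_L\}$ with $\max\{|\langle y,g\rangle|: g\in D_L\}$; this is a true and useful observation for connecting (\ref{rho(x)}) with the greedy step (\ref{greal}), but it is not actually needed for the inequality in Lemma~\ref{lemmaA} itself, since the bound $\langle y,y_j/|y_j|\rangle\le\max_{g\in D_L}\langle y,g\rangle$ already holds regardless of sign.
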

\begin{proof}  Let $y=y_1+\dots+y_K$, $y_i\in L_i^\perp$. Then
$$
|y|^2=\langle y, y_1+\dots+y_K\rangle = \sum_{i=1}^K |y_i| \langle y, y_i/|y_i|\rangle
$$
$$
\le \sum_{i=1}^K |y_i| \langle y,g(y)\rangle= |y| \rho(y) \sum_{i=1}^K |y_i|,
$$
so that
$$
\rho(y)\ge \frac{|y|}{|y_1|+\dots+|y_K|}.
$$
\end{proof}
In the special case of the dictionary $D_L=(L_1^\perp\cup L_2^\perp)\cap S(H)$ the rate of convergence (\ref{1/6}) of the greedy approximation can be improved to $O(n^{-1/2})$.

\begin{theorem}\label{theorem3}
Let $L_1,L_2$ be closed subspaces of $H$. Then for any $x_0\in Y=L_1^\perp+L_2^\perp$, we have
$$
|x_n|\le \frac{C(x_0)}{\sqrt{n}}, \qquad n\in \NN.
$$
for remotest projections (\ref{rempr}) of $x_0$, and
\begin{equation}\label{t3}
|T^n(x_0)|\le \frac{\tilde C (x_0)}{\sqrt{n}}, \qquad n\in\NN.
\end{equation}
for alternating projections (\ref{altpr}) of $x_0$, where $C(x_0)$ and $\tilde C (x_0)$ are constants depending only on $x_0$.

In both of these estimates, $\sqrt{n}$ cannot be replaced by $n^{1/2+\varepsilon}$ for any $\varepsilon>0$.
\end{theorem}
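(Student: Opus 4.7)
My plan has three ingredients: a uniform bound on the norm $s(x_n)$ along the remotest projection sequence, an elementary iteration, and a sharpness construction.

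For the upper bound on remotest projections, I will start with a decomposition $x_0=a_0+b_0$, $a_0\in L_1^\perp$, $b_0\in L_2^\perp$, with $|a_0|+|b_0|\le 2s(x_0)$. By Lemma~\ref{lemma2} every $x_n$ lies in $Y$, and the plan is to produce inductively decompositions $x_n=a_n+b_n$ with $a_n\in L_1^\perp$, $b_n\in L_2^\perp$ and $\max\{|a_n|,|b_n|\}\le M:=\max\{|a_0|,|b_0|\}$. When $i(n)=2$, the hypothesis $b_n\in L_2^\perp$ gives
$$
x_{n+1}=x_n-P_2^\perp x_n=a_n+b_n-P_2^\perp a_n-b_n=a_n-P_2^\perp a_n,
$$
so the choices $a_{n+1}:=a_n$ and $b_{n+1}:=-P_2^\perp a_n$ work and preserve the bound by $M$; the case $i(n)=1$ is symmetric. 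Hence $s(x_n)\le 2M\le 4s(x_0)=:S$ for every $n$. Combining this with Lemma~\ref{lemmaA} and (\ref{rho}) yields $|x_{n+1}|^2\le|x_n|^2(1-|x_n|^2/S^2)$, and the elementary inequality $1/(1-t)\ge 1+t$ gives $1/|x_{n+1}|^2\ge 1/|x_n|^2+1/S^2$, whence $|x_n|^2\le S^2/n$ by induction. The alternating projection bound is then immediate from the identity $T^nx_0=\tilde x_{2n-1}$, where $\{\tilde x_n\}$ is the remotest sequence starting at $\tilde x_0:=P_1x_0\in Y$, noted in the introduction.

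For the sharpness claim, I will use a classical vanishing-Friedrichs-angle example: take $H=\ell_2$ with standard basis $\{e_k\}$, pick angles $\theta_k\searrow 0$ with $\sin\theta_k=1/k$, and set
$$
L_1=\overline{\spn}\{e_{2k-1}:k\in\NN\},\qquad L_2=\overline{\spn}\{\cos\theta_k\,e_{2k-1}+\sin\theta_k\,e_{2k}:k\in\NN\}.
$$
A direct calculation on the $k$-th two-dimensional invariant block yields $|T^ne_{2k-1}|^2=\cos^{4n-2}\theta_k$. Taking $x_0=\sum_k\alpha_k\,e_{2k-1}$ with $\alpha_k^2=1/(k^3\log^2k)$, the condition $\sum_k\alpha_k^2/\sin^2\theta_k<\infty$ places $x_0$ inside $Y$ (via the explicit decomposition into $L_1^\perp$ and $L_2^\perp$ on each block); on the range $k\ge 2\sqrt n$ the factor $\cos^{4n-2}\theta_k=(1-1/k^2)^{2n-1}$ is bounded below by a positive constant, so comparing $|T^nx_0|^2=\sum_k\alpha_k^2\cos^{4n-2}\theta_k$ to the integral $\int_{2\sqrt n}^{\infty}dk/(k^3\log^2k)$ yields a lower bound of order $1/(n\log^2n)$, which rules out any estimate $|T^nx_0|\le Cn^{-1/2-\varepsilon}$. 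The corresponding sharpness for remotest projections follows from the same identity $T^nx_0=\tilde x_{2n-1}$.

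The main obstacle will be the sharpness construction, where two opposing constraints must be balanced: $x_0\in Y$ forces $\alpha_k$ to decay faster than $\sin\theta_k$, while the lower bound on $|T^nx_0|$ requires $\alpha_k$ to decay slowly enough that the tail $k\ge 2\sqrt n$ of the spectral series remains significant. The uniform control of $s(x_n)$ is comparatively clean in the case $K=2$ because after the first remotest step $x_n$ alternates between $L_1$ and $L_2$ and each step cancels exactly one component of the current decomposition; no such cancellation is available for $K\ge 3$, which is presumably why Theorem~\ref{theorem4} will require the more intricate DeVore--Temlyakov machinery.
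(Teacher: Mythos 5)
Your proof is correct and runs parallel to the paper's argument, with a few genuinely different (and equally valid) choices of machinery. For the upper bound the shared insight is that the auxiliary norm $s(x_n)$ stays bounded along the $K=2$ trajectory. The paper proves $s(x_n)$ is monotone nonincreasing for $n\ge 1$ (using that $x_n$ lies in $L_1$ or $L_2$ and comparing the two pieces of a near-optimal decomposition), then invokes Lemma~\ref{lemmaB}. You instead carry along an explicit running decomposition $x_n=a_n+b_n$ in which one component is simply kept and the other replaced by a projection of it, giving both $|a_n|,|b_n|\le M=\max\{|a_0|,|b_0|\}$ and hence $s(x_n)\le 4s(x_0)$ from $n=0$ onward; you then replace Lemma~\ref{lemmaB} by the elementary reciprocal trick $1/(1-t)\ge 1+t$, yielding $1/|x_{n+1}|^2\ge 1/|x_n|^2+1/S^2$. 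Both routes produce $|x_n|^2=O(1/n)$, yours with a marginally worse constant but fewer imported lemmata. For the sharpness statement your construction is, if anything, a little cleaner than the paper's: by choosing $\alpha_k^2=1/(k^3\log^2 k)$ you obtain a \emph{single} $x_0\in Y$ with $|T^nx_0|\gtrsim 1/(\sqrt n\,\log n)$, defeating $n^{-1/2-\varepsilon}$ for every $\varepsilon>0$ at once, whereas the paper builds a separate starting element (coefficients $\sim m^{-1/2-\varepsilon}$) for each $\varepsilon$. Both examples share the same geometry — two subspaces of $\ell_2$ with blockwise angles $\theta_k\sim 1/k$ — and balance the same two constraints: $\sum\alpha_k^2/\sin^2\theta_k<\infty$ so that $x_0\in Y$, and $\alpha_k$ decaying slowly enough that the tail $k\ge 2\sqrt n$ retains mass. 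Your closing observation about why the exact-cancellation argument for $s(x_n)$ breaks down for $K\ge 3$ is precisely the obstruction that forces the paper to introduce the majorizing sequence $\{b_n\}$ in Theorem~\ref{theorem4}.
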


\begin{proof} 1. By Lemma~\ref{lemma2}, both of the  sequences $\{x_n\}$ and $\{T^n x_0\}$ belong to $Y$ for any $x_0\in Y$.
We show that the sequence $\{s(x_n)\}$
is decreasing, hence
$s(x_{n})\le s(x_1)$ for $n\in \NN$.  Indeed, every $x_n$ belongs to $L_1$ or to $L_2$. Suppose $x_n\in L_1$, $x_n=y_1+y_2$, $y_i\in L_i^\perp$. We have $x_n\perp y_1$ and $y_2=x_n-y_1$, hence $|y_1|\leq |y_2|$. Next, $x_{n+1}=P_2(x_n)=P_2(y_1)=y_1+y_2'$, where $y_2'=P_2(y_1)-y_1=-P_2^\perp y_1\in L_2^\perp$, and thus $|y_2'|\le |y_1|$. Consequently,
$$
|y_1|+|y_2'|\le |y_1|+|y_1| \le |y_1|+|y_2|,
$$
and hence $s(x_{n+1})\le s (x_n)$.

2. By Lemma~\ref{lemmaA} for any $x_0\in Y$ and $n\in \NN$ we have
$$
\rho(x_n)\ge \frac{|x_n|}{s(x_n)}\ge \frac{|x_n|}{s(x_1)},
$$
which together with (\ref{rho}) implies
\begin{equation}\label{x_n}
|x_{n+1}|^2\le |x_n|^2\left(1-\frac{|x_n|^2}{s(x_1)^2}\right), \qquad n\in \NN.
\end{equation}

Now we need

\begin{lemmaX}\label{lemmaB}~\cite[Lemma 2.16]{teml_book}.
Suppose  the sequence $\{c_n\}_{n=1}^\infty$ satisfies $c_n\ge 0$, $c_1\le A$, and  $c_{n+1}\le c_n(1-c_n/A)$ for $n\in \NN$. Then
$$
c_n\le \frac{A}{n}, \qquad n\in \NN.
$$
\end{lemmaX}
\begin{proof} For  $n=1$ the inequality is  satisfied;
for $n=2$ it is proved as follows:
$$
c_2\le c_1 (1-c_1/A)\le \max_{t\in {\mathbb R}} \{f(t):=t(1-t/A)\}=f(A/2)=A/4\le A/2.
$$
For $n\ge 3$ the inequality is proved by induction using the monotonicity of $f$ on $[0,A/2]$:
$$
c_{n+1}\le c_n(1-c_n/A)=f(c_n)\le f(A/n)=\frac{A}{n}\left(1-\frac{1}{n}\right)<\frac{A}{n+1}.
$$
\end{proof}

Applying Lemma~\ref{lemmaB} to (\ref{x_n}) and taking into account the inequality $|x_1|^2\le s(x_1)^2$, we get
$$
|x_n|^2\le \frac{s(x_1)^2}{n}, \qquad n\in \NN.
$$
Since $T^n (x_0)$ is the $(2n-1)$-th remotest projection of $P_1x_0$, the last inequality implies
$$
|T^n (x_0)|^2\le \frac{s(P_1x_0)^2}{2n-1}, \qquad n\in \NN,
$$
and the first part of the theorem is proved.

3. Finally we show the optimality of the estimate: for any $\varepsilon>0$ we present two subspaces $L_{1},L_{2}$ and an element $x_0\in Y=L_1^\perp+L_2^\perp$ such that $|x_n|>C/n^{1/2+\varepsilon}$ for some $C>0$.

Consider $H$ as a sum of mutually orthogonal 2-dimensional Euclidean subspaces $H_m$, $m\in \NN$. In each $H_m$, we take unit vectors $e_m^1$ and $e_m^2$ with the angle $\alpha_m=1/m$ between them and also unit vectors $y_m^1\perp e_m^1$, $y_m^2\perp e_m^2$ with the angle $\pi-\alpha_m$ between $y_m^1$ and $y_m^2$. Let $L_j$ be the closed subspace of $H$ generated by $e_1^j,e_2^j,e_3^j,\dots$, so that $L_j^\perp$ is the closed linear span of $y_1^j,y_2^j,y_3^j,\dots$, for  $j=1,2$.

Setting $y_m=y_m^1+y_m^2$ ($m\in \NN$), consider
$$
x_0=\sum_{m=1}^\infty \frac{y_m}{m^{1/2+\varepsilon}} =\sum_{m=1}^\infty \frac{y_m^1}{m^{1/2+\varepsilon}} + \sum_{m=1}^\infty \frac{y_m^2}{m^{1/2+\varepsilon}}\in L_1^\perp+L_2^\perp;
$$
all series converge in $H$. We have
$$
|x_0|^2=\sum_{m=1}^\infty \frac{|y_m|^2}{m^{1+2\varepsilon}}=4\sum_{m=1}^\infty \frac{\sin^2 (\alpha_m/2)}{m^{1+2\varepsilon}}.
$$

Consecutive application of the projections $P_1,P_2,P_1,P_2,\dots$ to $x_0$ occurs ``coordinatewise".  That is, we iterate the projections in each of the 2-dimensional  subspace $H_m$, where the term $y_m/m^{1/2+\varepsilon}$ is consecutively projected onto lines with directions $e_m^1, e_m^2, e_m^1, e_m^2, \dots$, and its length is multiplied by $\cos (\alpha_m/2)$ after the first projection and then each time by $\cos \alpha_m$. Thus
\begin{equation}\notag
\begin{split}
&|T^nx_0|^2=|x_{2n}|^2=\sum_{m=1}^\infty \frac{|y_m|^2}{m^{1+2\varepsilon}}\cos^2\frac{\alpha_m}{2} (\cos \alpha_m)^{2(2n-1)} \\
&=\sum_{m=1}^\infty \frac{\sin^2 \alpha_m}{m^{1+2\varepsilon}}(\cos \alpha_m)^{4n-2}
\ge \sum_{m=1}^\infty \frac{((2 \alpha_m)/\pi)^2}{m^{1+2\varepsilon}}\left(1- \frac{\alpha_m^2}{2}\right)^{4n-2} \\
&=\frac{4}{\pi^2}\sum_{m=1}^\infty \frac{1}{m^{3+2\varepsilon}}\left(1- \frac{1}{2m^2}\right)^{4n-2}
\ge \frac{4}{\pi^2}\sum_{m\ge \sqrt{2n-1}} \frac{1}{m^{3+2\varepsilon}}\left(1- \frac{1}{2m^2}\right)^{2m^2} \\
&\ge C_1\sum_{m\ge \sqrt{2n-1}}\frac{1}{m^{3+2\varepsilon}}\ge C_2\int_{\sqrt{2n}}^\infty\frac{dt}{t^{3+2\varepsilon}}=\frac{C_3}{n^{1+\varepsilon}}
\end{split}
\end{equation}
for certain constants $C_{1,2,3}>0$. Above we have used that $2\sin \alpha\cos \alpha=\sin 2\alpha$,
that $\sin \alpha\geq 2\alpha /\pi$ and $\cos \alpha \geq 1-\alpha^2/2$ for $\alpha\in [0,\pi/2]$, and that $\lim_{t\to +\infty} (1-1/t)^t=1/e$.
\end{proof}

 Assume that the dense set  $Y=L_1^{\perp}+\dots +L_K^{\perp}$ is not  closed and hence (A3) takes place. Deutsch and Hundal asked where     the initial points
for the arbitrarily slow convergence of the alternating projections $T$ lie, and conjectured that they lie in   $H\setminus Y$ \cite{DH}.

 For a sequence $r=\{r_n\}$, $r_n\geq 0$, $r_n\to 0$,  let
$S_r=\{x \in H:\, |T^n x|>r_n$  for all $n\}$
be the starting points of ``$r$-slow" convergence of $T$.
M\"uller announced   that
  $S_r\cap (H\setminus Y)\neq \emptyset$ for any $r$ (V. M\"uller, {\em unpublished manuscript}, 2017).
  In the next theorem we resolve the question of Deutsch and Hundal fully. We show  that there even exist sequences $r$ so that $S_r\subset H\setminus Y$.

\begin{theorem}\label{theorem4}
Let  $L_1,\dots,L_K$ be closed subspaces of $H$, and  $Y=L_1^\perp+\dots+L_K^\perp$.  Then for every  $x_0\in Y$ there exists $c(x_0)>0$ such that
\begin{equation}\label{t4}
|T^n(x_0)|\leq c(x_0)\cdot n^{-1/(4 K\sqrt K+2)}, \qquad n=1,2,\dots .
\end{equation}
\end{theorem}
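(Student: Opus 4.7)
The plan is to adapt the DeVore--Temlyakov potential-function technique, monitoring $a_n:=|T^nx_0|^2$ and $\sigma_n:=s(T^nx_0)$ simultaneously. By Lemma~\ref{lemma2} we have $T^nx_0\in Y$ for all $n$, so $\sigma_n$ is well defined, and Lemma~\ref{lemmaA} implies $\sigma_n\ge\sqrt{a_n}$. With $x^{(0)}:=x$ and $x^{(j)}:=P_jx^{(j-1)}$, I would first establish two recursions.

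\emph{Decrement:} Since $|x|^2-|Tx|^2=\sum_{j=1}^{K}|P_j^\perp x^{(j-1)}|^2$, if $j^*$ realizes $|P_{j^*}^\perp x|=\rho(x)|x|$, then the triangle inequality for $P_{j^*}^\perp$ gives $|P_{j^*}^\perp x^{(j^*-1)}|\ge\rho(x)|x|-|x-x^{(j^*-1)}|$, while both $|P_{j^*}^\perp x^{(j^*-1)}|$ and $|x-x^{(j^*-1)}|$ are at most $\sqrt{|x|^2-|Tx|^2}$. Hence $|x|^2-|Tx|^2\ge\rho(x)^2|x|^2/4$, and Lemma~\ref{lemmaA} yields $a_{n+1}\le a_n-a_n^2/(4\sigma_n^2)$. \emph{Growth of $s$:} Decomposing any $x=\sum_k y_k$ with $y_k\in L_k^\perp$ gives $Tx=\sum_k(y_k-P_k^\perp x^{(k-1)})$ with summands still in the respective $L_k^\perp$; triangle inequality and Cauchy--Schwarz then yield $\sigma_{n+1}\le\sigma_n+\sqrt{K(a_n-a_{n+1})}$.

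The heart of the argument is then to introduce the \emph{potential} $\Phi_n:=a_n^{K^{3/2}}\sigma_n$ and prove $\Phi_n\le\Phi_0$ for all $n$. Set $\delta:=1-a_{n+1}/a_n\in[0,1]$. If $\delta\ge 1/2$, then $(a_{n+1}/a_n)^{K^{3/2}}\le 2^{-K^{3/2}}$ and, via $\sqrt{a_n-a_{n+1}}\le\sqrt{a_n}\le\sigma_n$, also $\sigma_{n+1}/\sigma_n\le 1+\sqrt{K}$, so $\Phi_{n+1}/\Phi_n\le 2^{-K^{3/2}}(1+\sqrt{K})<1$ for every $K\ge 2$. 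If $\delta<1/2$, the decrement forces $a_n/\sigma_n^2\le 4\delta$, whence $\sqrt{a_n-a_{n+1}}/\sigma_n\le 2\delta$ and $\sigma_{n+1}/\sigma_n\le 1+2\sqrt{K}\,\delta\le 1+K^{3/2}\delta$ (using $2\sqrt{K}\le K^{3/2}$, which holds exactly for $K\ge 2$); the elementary inequality $(1-\delta)^r(1+r\delta)\le 1$, an immediate consequence of $1+t\le e^t$, with $r=K^{3/2}$ then gives $\Phi_{n+1}\le\Phi_n$.

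Once $\sigma_n\le\Phi_0/a_n^{K^{3/2}}$ is in hand, the decrement reads $a_{n+1}\le a_n-a_n^{\,2+2K^{3/2}}/(4\Phi_0^2)$, and a routine extension of Lemma~\ref{lemmaB} (or comparison with the ODE $y'=-y^{\,2+2K^{3/2}}/(4\Phi_0^2)$) yields $a_n\le C(x_0)\,n^{-1/(1+2K^{3/2})}$; taking square roots gives the announced bound $|T^nx_0|\le c(x_0)\,n^{-1/(4K\sqrt{K}+2)}$. The principal difficulty is the discovery and verification of $\Phi_n$: the exponent $K^{3/2}$ is precisely the one for which both cases of the monotonicity argument close, with the tight inequality $2\sqrt{K}\le K^{3/2}$ pinning down the boundary case $K=2$ and thereby dictating the unusual final exponent.
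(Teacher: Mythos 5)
Your overall strategy is essentially the paper's: track both $|T^nx_0|$ and $s(T^nx_0)$, and close the argument with a potential of the form $a_n^{r}\sigma_n$. But there is a genuine error in your one-step decrement. You assert that $|x-x^{(j^*-1)}|\le\sqrt{|x|^2-|Tx|^2}=:D$. Writing $v_j=P_j^\perp x^{(j-1)}$, one has $x-x^{(m)}=v_1+\dots+v_m$ and $D^2=\sum_{j=1}^{K}|v_j|^2$, but the $v_j$ lie in different subspaces $L_j^\perp$ and are not mutually orthogonal; when several of them are roughly parallel, $|v_1+\dots+v_m|$ can exceed $D$. A concrete counterexample: in $\R^2$ take $L_1=\R e_1$, $L_2=\R(e_1+e_2)$, $L_3=\R e_2$, $L_4=\R(10e_1+e_2)$, and $x=(1,-10)$. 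Then $j^*=4$ (since $|P_4^\perp x|=\sqrt{101}$ exceeds $|P_1^\perp x|=10$), $x^{(3)}=(0,\tfrac12)$, so $|x-x^{(3)}|^2=111.25$, while $D^2=|x|^2-|Tx|^2<101$. Cauchy--Schwarz only gives $|x-x^{(j^*-1)}|\le\sqrt{K-1}\,D$, so the correct constant in the decrement is $(1+\sqrt{K-1})^2$ rather than $4$ (or, via the cruder triangle-inequality estimate that the paper uses, $K^2$). Your constant $4$ is correct only for $K=2$.

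Fortunately this is only a loss of constants and the proof survives the repair. With the corrected decrement $a_{n+1}\le a_n\bigl(1-a_n/(K^2\sigma_n^2)\bigr)$, your $\delta$ satisfies $\sqrt{a_n}/\sigma_n\le K\sqrt\delta$, hence $\sigma_{n+1}/\sigma_n\le 1+\sqrt{K}\cdot K\delta=1+K^{3/2}\delta$ directly, and $(1-\delta)^{K^{3/2}}(1+K^{3/2}\delta)\le 1$ holds for all $\delta\in[0,1]$; the split into the cases $\delta\ge\tfrac12$ and $\delta<\tfrac12$ then becomes unnecessary. Your extraction $\sigma_n\le\Phi_0/a_n^{K^{3/2}}$ followed by substitution into the decrement and Lemma~\ref{lemmaB} reproduces the paper's exponent $1/(4K^{3/2}+2)$. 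Modulo normalization (your $a_n$ is the squared norm), $\Phi_n$ is a monotone reparametrization of the paper's quantity $a_n^2 b_n^{\alpha}$ with $\alpha=K^{-3/2}$; the one real streamlining in your version is that working directly with the potential lets you use $\sigma_n=s(T^nx_0)$ itself, whereas the paper introduces an auxiliary increasing majorant $b_n$ to sidestep the possible non-monotonicity of $s_n$ in its intermediate step $a_n^2/b_n^2\le K^2/n$.
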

\begin{proof} 1. For $y\in Y$, we have $Ty\in Y$ by Lemma 2. More precisely,
\begin{equation}\label{v}
\begin{split}
Ty&=P_K\dots P_1 y=(I-P_K^{\perp})P_{K-1}\dots P_1 y \\
&= y-v_1-\dots -v_K,
\end{split}
\end{equation}
where $v_j=P_j^\perp P_{j-1}\dots P_1y\in L_j^\perp$.
For $y\neq 0$ we denote
$$
\nu(y)=(|v_1|^2+\dots+|v_K|^2)^{\frac 12}/|y|.
$$
By the Pythagoras theorem,
$|P_{j}\dots P_1y|^2=|P_{j-1}\dots P_1y|^2-|v_j|^2$. Adding these equalities yields
\begin{equation}\label{1step}
|Ty|^2=|y|^2(1-\nu^2(y)).
\end{equation}

2. We   estimate the growth of the norm $s$ defined in (\ref{s(y)}) as $y\in Y$ is being mapped to  $Ty$.
Since $v_j\in Y$, by (\ref{v}) and by Cauchy-Schwarz inequality we obtain
\begin{equation}\label{sTs}
s(Ty)\leq s(y)+\sum_{j=1}^K|v_j| \leq s(y)+\sqrt K|y|\nu(y).
\end{equation}

3. The quantity $\nu(y)$  determines  the decay  of the norm for the alternating projection step $y\to Ty$, just as $\rho(x)$ from (\ref{rho(x)}) does it for the remotest step. We have to estimate $\nu(y)$ from below, similarly  as we estimated $\rho(y)$   in Lemma A.

By (\ref{g(x)}) and (\ref{rho(x)}), one of the nearest points for $y$ in $L_1^\perp \cup\dots\cup L_K^\perp$ is
$$
P_k^\perp y=\langle y, g(y) \rangle g(y) =\rho(y) |y| g(y)
$$
for some $k=k(y)\in \{1,\dots,K\}$. We have
$$
|P_k^{\perp}y|=|y-P_ky|= \dist(y,L_k)\leq |y-P_k\dots P_1y|=|v_1+\dots +v_k|.
$$
On the other hand, Lemma~\ref{lemmaA} gives
$$
|P_k^{\perp}y|=\rho(y) |y|\geq\frac{|y|^2}{s(y)}.
$$
Hence  there exists an $m\in \{1,\dots,k\}$ so that $|v_m|\ge |y|^2/(ks(y))\ge |y|^2/(Ks(y)) $, and thus
\begin{equation}\label{nus}
\nu(y)\geq  \frac{|y|}{Ks(y)}.
\end{equation}

4. Let $0\neq x_0\in Y$ be given. Recursively we define the four sequences
\begin{equation}\notag
\begin{split}
a_n&=|T^nx_0|,\ a_0=|x_0|  \\
\nu_n&=\nu(T^nx_0),\ \nu_0=\nu(x_0)  \\
s_n&=s(T^nx_0),\ s_0=s(x_0)  \\
b_{n+1}&=b_n+\sqrt K a_n\nu_n ,\ b_0=s_0.  \\
\end{split}
\end{equation}
We introduce the auxiliary increasing sequence $\{b_n\}$, as it is not clear that $\{s_n\}$ is monotone.
Then $s_n\leq b_n$ by (\ref{sTs}) and by induction. Hence
$$
\frac{a_n}{b_n}\leq \frac{a_n}{s_n}\leq K\nu_n,
$$
by (\ref{nus}). Therefore
$$
b_{n+1}=b_n(1+\sqrt K \nu_n\frac{a_n}{b_n})\leq b_n(1+K^{\frac 32}\nu_n^{2}).
$$
We define $\alpha=K^{-\frac 32}<1$ and use   Bernoulli's inequality $(1+t)^{\alpha}\leq 1+\alpha t$ for $t\ge 0$ to derive that
$$
b^{\alpha}_{n+1}\leq b_n^{\alpha}(1+\nu_n^2).
$$
Since  $a^2_{n+1}=a^2_n(1-\nu_n^2)$ by (\ref{1step}),
\begin{equation}\label{a/b}
a^2_{n+1}b^{\alpha}_{n+1}\leq a^2_nb^{\alpha}_n(1-\nu_n^4)\leq \dots\leq a^2_0b^{\alpha}_0=|x_0|^2s_0^{\alpha}.
\end{equation}
The sequence $\{b_n\}$ is increasing, hence by (\ref{nus}) we get
$$
\frac{a^2_{n+1}}{b_{n+1}^{2}}\leq\frac{a^2_{n}(1-\nu_n^2)}{b_{n}^{2}}\leq \frac{a^2_{n}}{b_{n}^{2}}\left(1-\frac{1}{K^2}\frac{a^2_n}{s^2_n}\right)\leq \frac{a^2_n}{b^2_n}\left(1-\frac{1}{K^2} \frac{a^2_n}{b^2_n}\right).
$$
Lemma~\ref{lemmaB} then implies
\begin{equation}\label{ab}
a_n^2/b_n^2\leq K^2/(n+1) \leq K^2/n
\end{equation}
for all $n\in \NN$.
Hence by (\ref{a/b}) and (\ref{ab})
$$
a^{4+2\alpha}_n=a^4_nb^{2\alpha}_n\cdot\frac{a^{2\alpha}_n}{b^{2\alpha}_n}\leq |x_0|^4s_0^{2\alpha}K^{2\alpha}n^{-\alpha}
$$
and, finally, for a suitable $c(x_0)>0$ which depends on $x_0\in Y$ only (for given $L_1,\dots, L_K$), we get
$$
a_n\leq |x_0|^{\frac{2}{2+\alpha}}\cdot s(x_0)^{\frac{\alpha}{2+\alpha}}\cdot K^{\frac{\alpha}{2+\alpha}}\cdot n^{-\frac{1}{4/\alpha+2}}=c(x_0)\cdot n^{-\frac{1}{2(2 K^{3/2}+1)}}.
$$
\end{proof}

For $K=2$ Theorem~\ref{theorem4} gives a  much worse estimate than Theorem~\ref{theorem3}, and we face
\begin{problem}\label{problem3}
 Can one improve estimate (\ref{t4}) so that it would give
(\ref{t3}) for $K=2$? Ideally,  find the best possible power in (\ref{t4}) for $K\ge 3$.
\end{problem}

\section{Remotest and alternating projections are distinct}

We have seen that remotest projections are very similar to alternating projections in their convergence properties. A natural question arises: are they basically the same? If  $K=2$  this is obviously the case. Suppose $K\geq 3$.  Does the sequence $i(n)$ in (\ref{rempr}) become cyclic after a while for any starting element $x_0\in H$?

\begin{theorem}\label{theorem5}
There exist three   2-dimensional subspaces $L_{1,2,3}$ of ${\mathbb R}^4$ and a starting element $x_0\in {\mathbb R}^4$ such that the sequence of indices $i(n)$ of  its remotest projections never becomes cyclic.
\end{theorem}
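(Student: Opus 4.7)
My approach is to build $L_1, L_2, L_3$ from a product structure so that the remotest-projection dynamics on $\R^4$ reduces to a piecewise-translation dynamical system on the real line, and then to choose the six parameters so that this reduced system is aperiodic. Write $\R^4 = V \oplus W$ with $V, W$ copies of $\R^2$. For $k = 1, 2, 3$ pick a line $A_k \subset V$ at angle $\alpha_k$ and a line $B_k \subset W$ at angle $\beta_k$, and set $L_k := A_k \oplus B_k$. Each $L_k$ is two-dimensional and for generic angles $L_1 \cap L_2 \cap L_3 = \{0\}$. The product form decouples projections and distances:
$$
P_{L_k}(v, w) = (P_{A_k}v,\ P_{B_k}w), \qquad \dist(x, L_k)^2 = \dist(P_V x, A_k)^2 + \dist(P_W x, B_k)^2.
$$

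Because $x_n \in L_{i(n-1)}$ for $n \geq 1$, both $P_V x_n$ and $P_W x_n$ have angles fixed by $k := i(n-1)$. So the remotest index $i(n)$ is the maximizer over $j \neq k$ of the affine-in-$r_n$ quantity $r_n \sin^2(\alpha_j - \alpha_k) + \sin^2(\beta_j - \beta_k)$, where $r_n := |P_V x_n|^2 / |P_W x_n|^2$, and from each $k$ a single threshold in $r$ separates the two admissible next indices. The ratio evolves multiplicatively, so
$$
\log r_{n+1} = \log r_n + \mu_{k,\, i(n)}, \qquad \mu_{k, j} := \log \frac{\cos^2(\alpha_j - \alpha_k)}{\cos^2(\beta_j - \beta_k)},
$$
and the reduced state $(k_{n-1}, \log r_n)$ undergoes a piecewise translation by one of the six values $\mu_{k, j}$, $k \neq j$.

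I would then fix the angles so that (a) the six shifts $\mu_{k,j}$ are linearly independent over $\QQ$ (a generic condition, achieved for instance by letting one of the angles be transcendental), and (b) from each $k$ the large-slope choice (the one selected when $r_n$ is large) has $\mu_{k,j} < 0$ whereas the large-intercept choice (selected when $r_n$ is small) has $\mu_{k,j} > 0$. Condition (b) amounts to arranging that for each $k$ the index $j$ with the larger $|\alpha_j - \alpha_k|$ has the smaller $|\beta_j - \beta_k|$, and it forces $\log r_n$ to be self-correcting, so that, starting from an $x_0$ with $r_0$ in the attracting region, the orbit stays bounded and crosses each threshold infinitely often. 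If the index sequence $\{i(n)\}$ were eventually periodic of period $p$, then by boundedness the orbit of $\log r_n$ would also be eventually periodic, forcing $\sum_{\text{cycle}} \mu_{k,\, i(n)} = 0$ over one period, in contradiction to (a).

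The main obstacle is the simultaneous realization of conditions (a) and (b) together with the non-degeneracy assumptions (no ties in the maximization, $L_1 \cap L_2 \cap L_3 = \{0\}$, and $r_0$ landing in the attracting region) for an explicit configuration; this is most cleanly handled by a parameter-counting or genericity argument rather than an explicit angle-by-angle computation.
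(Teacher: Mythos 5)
Your overall strategy coincides with the paper's: split $\R^4 = V \oplus W$, take $L_k = A_k \oplus B_k$ with lines $A_k, B_k$, observe that the distance and projection decouple, and reduce the index dynamics to a one--parameter piecewise translation of $\log r_n$ (the paper's variable is $\lambda_k = \ln(\eta_k/\xi_k) = -\tfrac12\log r_{2k}$). Your closing step --- boundedness of $\log r_n$ plus $\QQ$-linear independence of the shifts forces the per-period shift to vanish, which is impossible --- is in fact a cleaner endgame than the paper's density/irrational-rotation argument; the paper's explicit angle choice and its conditions (c)--(d) deliver exactly the required irrationality.

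However, your condition (b) cannot be realized, and this is a structural obstruction, not a genericity issue, so the parameter-counting appeal at the end does not close the gap. The shift $\mu_{k,j} = \log\bigl(\cos^2(\alpha_j - \alpha_k)/\cos^2(\beta_j - \beta_k)\bigr)$ is \emph{symmetric}: $\mu_{k,j} = \mu_{j,k}$, so its sign is an attribute of the unordered pair $\{k,j\}$, and there are really only three distinct shifts, not six (in particular ``six $\QQ$-independent shifts'' is impossible, since $\mu_{k,j}-\mu_{j,k}=0$). Condition (b) asks that at each vertex $k\in\{1,2,3\}$ one incident edge carry sign $+$ and the other sign $-$. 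That requires $3$ plus-incidences in total, but the number of plus-incidences is $2\cdot(\text{number of }+\text{ edges})$, which is even. So at some vertex both choices have the same sign, and there the dynamics is not self-correcting; without boundedness of $\log r_n$ the whole argument collapses (and indeed, if $\log r_n$ drifts monotonically, the index sequence eventually becomes $2$-periodic, the opposite of what you want). The paper's repair is to \emph{not} demand a genuine threshold at every state: the angles are chosen (conditions on $\alpha,\beta,\gamma,\delta$) so that from $L_2$ and from $L_3$ the remotest subspace is always $L_1$, irrespective of $r$. Then the orbit returns to $L_1$ at every other step, $\log r$ evolves by the two-step translations $2\mu_{1,2}$ and $2\mu_{1,3}$, only the single threshold at state $1$ is needed, and the self-correction $\mu_{1,2}<0<\mu_{1,3}$ is easy to arrange. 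Once you have this ``degenerate at two states, self-correcting at one'' structure, boundedness follows and your irrationality argument finishes the proof. You should make this modification explicit rather than hoping a generic configuration satisfies (b) at all three states.
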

\begin{proof}
Consider ${\mathbb R}^4$ as a sum ${\mathbb R}^2\oplus{\mathbb R}^2$ of two mutually orthogonal 2-dimensional subspaces. We choose unit vectors $e_{1,2,3}$ in the first copy of ${\mathbb R}^2$ such that all three angles between them are acute and $e_3$ ``lies" between $e_1$ and $e_2$. In the second copy of  ${\mathbb R}^2$  we also choose unit vectors $u_{1,2,3}$ with acute angles between them, but now  $u_2$ ``lies" between $u_1$ and $u_3$. Suppose that the angles $\alpha=\widehat{e_1e_2}$, $\beta=\widehat{e_1e_3}$, $\gamma=\widehat{u_1u_3}$, $\delta=\widehat{u_1u_2}$ satisfy the following conditions:
\begin{itemize}
\item[(a)] $\alpha>\gamma>\beta>\delta>\alpha/2$;
\item[(b)] $\cos^2\delta-\cos^2\gamma=\cos^2\beta-\cos^2\alpha$;
\item[(c)] $r_1=\cos^2\alpha/\cos^2\delta$  and $r_2=\cos^2\gamma/\cos^2\beta$ are rational numbers;
\item[(d)]   $r_1^m\not= r_2^n$ for any positive integers $m$ and $n$.
\end{itemize}
For instance, one can take $\alpha=\arccos \sqrt{1/11}$, $\gamma=\arccos \sqrt{2/11}$, $\beta=\arccos \sqrt{3/11}$, $\delta=\arccos \sqrt{4/11}$.

We set $L_j={\rm span}\,\{e_j, u_j\}$, $j=1,2,3$. Since $\beta>\alpha/2$ in view of (a), the distance of any nonzero element $\xi e_3+\eta u_3\in L_3$ from $L_1$ is greater than that from $L_2$. Condition (a) also implies $\delta>\gamma/2$, and hence the distance of any nonzero element $\xi e_2+\eta u_2\in L_2$ from $L_1$ is greater than that from $L_3$. As for elements $x=\xi e_1+\eta u_1\in L_1$, the remotest subspace ($L_2$ or $L_3$) for them depends on the values of $\xi$ and $\eta$. If
$$
|P_2x|^2=|(\xi\cos \alpha) e_2+(\eta\cos \delta) u_2|^2=\xi^2\cos^2\alpha+\eta^2\cos^2\delta
$$
is greater than
$$
|P_3x|^2=|(\xi\cos \beta) e_3+(\eta\cos \gamma) u_3|^2=\xi^2\cos^2\beta+\eta^2\cos^2\gamma,
$$
that is, in view of (b), if  $|\eta|>|\xi|$, then $\rho(x,L_3)>\rho(x,L_2)$; while $|\eta|<|\xi|$ is equivalent to $\rho(x,L_2)>\rho(x,L_3)$.

Thus by iterating remotest projections the  element $x_n=\xi_n e_1+\eta_n u_1\in L_1$ is projected onto $L_3$ if $|\eta_n|>|\xi_n|$ and onto $L_2$ if $|\eta_n|<|\xi_n|$. Its image is  afterwards in both cases projected  onto $L_1$. In other words,
$$
|\eta_n|>|\xi_n| \Rightarrow i(n)=3,\   x_{n+2}=(\xi_n\cos^2\beta) e_1+(\eta_n\cos^2 \gamma) u_1,
$$
$$
|\eta_n|<|\xi_n| \Rightarrow i(n)=2,\   x_{n+2}=(\xi_n\cos^2\alpha) e_1+(\eta_n\cos^2 \delta) u_1.
$$

Consider the starting element $x_0=\xi e_1+\eta u_1\in L_1$, $\xi, \eta>0$, with  an irrational ratio $\xi/\eta$. Then for any even $n=2k$ the element $x_n$ has the form
$$
x_n=\xi_k e_1+\eta_k u_1=(\xi\cos^{2l} \alpha\cos^{2m}\beta) e_1+(\eta\cos^{2l} \delta\cos^{2m}\gamma) u_1
$$
for some $l=l(n)$ and $m=m(n)$ and positive values $\xi_k$ and  $\eta_k$ which are not equal to each other in view of (c). Hence the number $i(n)$ is uniquely determined and for any odd $n$, we have $i(n)=1$. Eventually the whole sequence $\{i(n)\}$ consists of pairs 21 and 31.
We show that this sequence never becomes cyclic.

We consider the sequence of points $(\xi_k, \eta_k)$ in the plane $L_1$. It is changing according to the rule
\begin{equation}\label{i(n)=3}
\xi_{k+1}=\xi_k \cos^2\beta, \eta_{k+1}=\eta_k\cos^2\gamma\quad \hbox{if} \quad |\eta_k|>|\xi_k|,
\end{equation}
\begin{equation}\label{i(n)=2}
\xi_{k+1}=\xi_k \cos^2\alpha, \eta_{k+1}=\eta_k\cos^2\delta\quad \hbox{if} \quad |\eta_k|<|\xi_k|,
\end{equation}
Our aim is to show that the choice between (\ref{i(n)=3}) and (\ref{i(n)=2}) never becomes cyclic.

Consider the sequence $\lambda_k=\ln (\eta_{k}/\xi_{k})$, $k=0,1,2,\dots$. It is changing according to the rule $\lambda_{k+1}=f(\lambda_k)$, where
$$
f(t)=\left\{
\begin{array}{ccc}
t-a &,& t\ge 0\\
t+b &,& t<0,\\
\end{array}
\right.
$$
$a=\ln(\cos^2\beta/\cos^2\gamma)>0$, $b=\ln(\cos^2\delta/\cos^2\alpha)>a$.
This function shifts $[-a,0)$ to $[b-a,b)$ and $[0,b)$ to $[-a,b-a)$. Thus it  transforms the half-interval $[-a,b)$ bijectively onto itself by permutation of  its two parts; it is the  so called ``baker's map". The orbit $\{f^k(t)\}$ of any point finds itself on this half-interval for $k\ge k(t)$. A well-known trick provides a continuous isomorphic  copy of this map. If one glues this half-interval onto a circle $S$ of length $a+b$ by identifying $-a$ and $b$,  then  $f$ induces a rotation $\tilde f$ of $S$ by arc of length $a$. Since the ratio
$$
\frac{a+b}{a}=1+\frac{\ln(\cos^2\delta/\cos^2\alpha)}{\ln(\cos^2\beta/\cos^2\gamma)}
$$
is irrational in view of (d), the orbit $\{\tilde f^k(t)\}$ of any point is dense in $S$. Consequently, $\{\lambda_k\}$ is dense in $[-a,b)$.

This density contradicts the possible cyclicity of $i(2k)$. Indeed, let $i(2k)$ become periodic with period $N$ starting from some $k_0$. Take some $\lambda_k\in (0,a)$ with $k>k_0$, so that $i(2k)=3$ and $i(2k+2)=2$. The sequence $\{\lambda_{k+N\nu}: \nu=0,1,2,\dots\}$ is also dense in $[-a,b)$ by the same  reasoning as above.  Hence  there exists  $\nu$ with $\lambda_{k+N\nu}\in (a,b)$, so that $i(2k+2N\nu)=3$ and $i(2k+2N\nu+2)=3\not= i(2k+2)$, which is a contradiction.
\end{proof}

The behavior of the sequence $\{i(n)\}$ in (\ref{rempr}) seems to be rather
mysterious. We wonder about the following:

\begin{problem}  Assume the sequence
$i(n)\in \{1,2,3\}$ satisfies $i(n)\not= i(n+1)$ for all $n\in \NN$. Do there
exist three closed subspaces  $L_1,L_2,L_3$ of $H$ and a starting point $x_0\in H$ having
exactly  this sequence of indices of its remotest projections (\ref{rempr})?
\end{problem}


\subsection*{Acknowledgements}
We thank V.N. Temlyakov for a fruitful discussion, and David Seifert for explaining the conjecture of Deutsch and Hundal.


\begin{thebibliography}{ABCD}

\bibitem[AA]{AA} I.~Amemiya, T.~Ando,
                {\em Convergence of random products of
                contractions in Hilbert space},
                Acta. Sci. Math. (Szeged) 26 (1965), 239-244.


\bibitem[BaGM1]{BaGM1} C. Badea, S. Grivaux, V. M\"uller, {\em A generalization of the Friedrichs angle and the method of alternating projections},
C. R. Math. Acad. Sci. Paris 348   (2010), 53-56.

\bibitem[BaGM2]{BaGM2} C. Badea, S. Grivaux, V. M\"uller, {\em The rate of convergence in the method of alternating projections}, Algebra i Analiz (St. Petersburg Math. J.) 23, (2011), 1-30.


\bibitem[BarRZ]{BarRZ} K. Barshad, S. Reich and R. Zalas, Strong coherence and its
     applications to iterative methods, J. Nonlinear Convex Anal.
     20 (2019), 1507--1523.
 \bibitem[BB]{BB} H. H. Bauschke, J.  M. Borwein, {\em On projection algorithms for solving feasibility problems}, SIAM Review 38 (1996), 367--426.


\bibitem[BDH]{BDH} H. H. Bauschke, F. Deutsch, H. Hundal, {\em Characterizing arbitrarily slow convergence in the method of  alternating projections}, Int. Trans. Oper. Res. 16, (2009), 413-425.

 \bibitem[DH]{DH} F. Deutsch, H. Hundal, {\em Slow convergence of sequences of linear operators II: arbitrarily slow convergence}, J. Approx. Theory 162, (2010), 1717-1738.

\bibitem[DeVT]{DeVT} R.A. DeVore and V.N. Temlyakov, Some remarks on Greedy Algorithms, Adv. Comp. Math., 5 (1996), 173-187.

\bibitem[GPR]{GPR} L. G. Gurin, B. T. Polyak and E. V. Raik, The method of
     projections for finding the common point of convex sets, USSR Comp.
     Math. and Math. Phys. 7 (1967), 1--24.

\bibitem[Ha]{Ha} I. Halperin, {\em The product of projection operators},
    Acta Sci. Math. (Szeged) 23 (1962), 96--99.


\bibitem[J]{Jones} L. Jones, On a conjecture of Huber concerning the
convergence of projection pursuit regression, Ann. Stat., {\bf
15}:2 (1987), 880--882.


\bibitem[K]{K} E. Kopeck\'a, When products of projections diverge, 2019. \\ http://arxiv.org/abs/1901.01921


\bibitem [KM]{KM}    E. Kopeck\'a, V. M\"uller, {\em A product of three  projections}, Studia Math. 223 (2014),  175--186.

\bibitem [KP]{KP}    E. Kopeck\'a, A. Paszkiewicz,   {\em  Strange products of projections},     Israel J. Math. 219 (2017), 271--286.

\bibitem[Liv]{Liv} E.D. Livshitz, On lower estimates of rate of convergence of greedy algorithms, Izv. Math., 73 (2009), 1197-1215.


\bibitem[N]{N} J.~von Neumann,
                {\em On rings of operators. Reduction theory},
                Ann. of Math. 50 (1949), 401-485.


\bibitem[P]{P} A. Paszkiewicz, {\em The Amemiya-Ando conjecture falls}, arXiv:1203.3354.


\bibitem[R]{R} W. Rudin, {\em Principles of Mathematical Analysis}, Third Edition, McGraw-Hill, 1976.

\bibitem[Sil]{Sil} A.V. Silnichenko, Rate of convergence of greedy algorithms, Math. Notes, 76 (2004), 582-586.




\bibitem[T]{teml_book} V.\,Temlyakov, {\it Greedy approximation}, Cambridge, 2011.

\bibitem[T1]{teml} V.\,Temlyakov, {\em Dictionary descent in optimization}, Anal. Math., 42:1 (2016), 69-89.

\end{thebibliography}
\end{document}